\documentclass[a4paper,12pt]{amsart}%{article}

%%% Layout
\usepackage[top=2.5cm, bottom=2.5cm, left=2.5cm, right=2.5cm]{geometry}

%%% Packages
\usepackage{amssymb}
\usepackage{ascmac}
% amsmath,amsthm
\usepackage[dvipdfmx]{graphicx}
\usepackage{color}
%\usepackage{ulem}
%\usepackage{times,mathptmx}

%%% pdf etc. links
\usepackage[dvipdfm,colorlinks=true,bookmarks=true,
%citecolor=blue,
bookmarksnumbered=true,bookmarkstype=toc,linktocpage=true
]{hyperref}

\newtheorem{Def}{Definition}[section]
\newtheorem{Thm}[Def]{Theorem}
\newtheorem{Lem}[Def]{Lemma}
\newtheorem{Assumption}[Def]{Assumption}
\newtheorem{Rem}[Def]{Remark}
\newtheorem{Cor}[Def]{Corollary}
\newtheorem{Prop}[Def]{Proposition}

\newtheorem{Example}[Def]{Example}

\numberwithin{equation}{section}

\allowdisplaybreaks

\usepackage{ascmac}

\newcommand{\mca}{\mathcal{A}}

\newcommand{\mcc}{\mathcal{C}}

\newcommand{\mcf}{\mathcal{F}}

\newcommand{\mcl}{\mathcal{L}}

\newcommand{\mbbu}{\mathbb{U}}

\newcommand{\mbbg}{\mathbb{G}}
\newcommand{\mbbh}{\mathbb{H}}

\newcommand{\mbbr}{\mathbb{R}}
\newcommand{\mbbx}{\mathbb{X}}

\newcommand{\mbby}{\mathbb{Y}}

\newcommand{\al}{\alpha}
\newcommand{\del}{\delta}
\newcommand{\ep}{\epsilon}

\newcommand{\D}{\Delta}

\newcommand{\Sig}{\Sigma}
\newcommand{\lam}{\lambda}

\newcommand{\gam}{\gamma}
\newcommand{\Gam}{\Gamma}
\newcommand{\p}{\partial}

 % <- Weak convergence
\newcommand{\cil}{\xrightarrow{\mcl}} % <- Convergence in law
\newcommand{\cip}{\xrightarrow{p}} % <- Convergence in probability
\newcommand{\argmax}{\mathop{\rm argmax}}

 % boldmath-->command: \Vec{...}

\def\nn{\nonumber}

\def\sumj{\sum_{j=1}^{n}}
\def\intj{\int_{t_{j-1}}^{t_j}}

% temp.

\def\tes{\hat{\theta}_{n}}
\def\aes{\hat{\alpha}_{n}}

\def\ges{\hat{\gamma}_{n}}
\def\bges{\hat{\gamma}_{n}^{\textbf{B}}}
\def\baes{\hat{\alpha}_{n}^{\textbf{B}}}

\def\pb{P^{\textbf{B}}}
\def\eb{E^{\textbf{B}}}

% Title, etc.
\title[Bootstrap method for misspecified ergodic L\'{e}vy driven SDE models]
{Bootstrap method for misspecified ergodic L\'{e}vy driven stochastic differential equation models}
\date{\today}
\keywords{Asymptotic normality, misspecified model, Gaussian quasi-likelihood estimation, extended Poisson equation, 
high-frequency sampling, L\'{e}vy driven stochastic differential equation, bootstrap method.}

\author{Yuma Uehara}
\address[Department of Mathematics, Faculty of Engineering Science, Kansai University, 3-3-35 Yamate-cho, Suita-shi, Osaka 564-8680, Japan
]{}
\email{y-uehara@kansai-u.ac.jp}

%%%%%%%%%%%%%%%%%%%%%%%%%

\begin{document}

\maketitle

\begin{abstract}
In this paper, we consider possibly misspecified stochastic differential equation models driven by L\'{e}vy processes.
Regardless of whether the driving noise is Gaussian or not, Gaussian quasi-likelihood estimator can estimate unknown parameters in the drift and scale coefficients.
However, in the misspecified case, the asymptotic distribution of the estimator varies by the correction of the misspecification bias, and consistent estimators for the asymptotic variance proposed in the correctly specified case may lose theoretical validity. 
As one of its solutions, we propose a bootstrap method for approximating the asymptotic distribution.
We show that our bootstrap method theoretically works in both correctly specified case and misspecified case without assuming the precise distribution of the driving noise.
\end{abstract}

\section{Introduction}
We suppose that the data-generating structure is the following one-dimensional stochastic differential equation
\begin{equation}\label{dg}
dX_t=A(X_t)dt+C(X_{t-})dZ_t,
\end{equation} 
defined on a stochastic basis $(\Omega,\mcf,(\mcf_t)_{t\geq0},P)$ where 
\begin{itemize}
    \item the driving noise $Z$ can either be a standard Wiener process or a pure-jump L\'{e}vy process;
    \item $\mcf_t=\sigma(X_0)\vee\sigma(Z_s;s\leq t)$;
    \item The initial variable $X_0$ is $\mcf_0$-measurable.
\end{itemize}
In this paper, we consider the situation where high-frequency samples $\mbbx:=(X_{t_j})_{j=0}^n$ from the solution path $X$ are obtained in the so-called ``rapidly increasing design": $t_j=t_j^n:=jh_n$, $T_n:=nh_n\to\infty$, and $nh_n^2\to0$.
To deal with the effect of model misspecification being inevitable in statistical modeling, we consider the following parametric model on $(\Omega,\mcf,(\mcf_t)_{t\geq0},P)$:
\begin{equation}\label{Model}
dX_t=a(X_t,\al)dt+c(X_{t-},\gam)dZ_t.
\end{equation}
Here the functional forms of drift coefficient $a:\mbbr\times\Theta_\al\mapsto\mbbr,$ and scale coefficient $c:\mbbr\times\Theta_\gam\mapsto\mbbr$ are supposed to be known except for the drift parameter $\al$ and scale parameter $\gam$.
We also suppose that $\al$ and $\gam$ belong to bounded convex domains $\Theta_\al\subset\mbbr^{p_\al}$ and $\Theta_\gam\subset\mbbr^{p_\gam}$, respectively.
We note that the coefficients are possibly misspeciefied, that is, the parametric family $\{(a(\cdot,\al),c(\cdot,\gam)); \al\in\Theta_\al,\gam\in\Theta_\gam\}$ does not include the true coefficients $(A(\cdot),C(\cdot))$.
From now on, the terminologies ``misspecification", ``misspeficied" and ``correctly specified" will be used for the above meaning unless otherwise mentioned.
In this framework, there are four possible cases:
\begin{enumerate}
    \item Correctly specified diffusion case: the driving noise $Z$ is a standard Wiener process and the coefficients are correctly specified;
    \item Misspecified diffusion case: the driving noise $Z$ is a standard Wiener process and the coefficients are missspecified;
    \item Correctly specified pure-jump L\'{e}vy driven case: the driving noise $Z$ is a pure-jump L\'{e}vy process and the coefficients are correctly specified;
    \item Misspecified pure-jump L\'{e}vy driven case: the driving noise $Z$ is a pure-jump L\'{e}vy process and the coefficients are missspecified.
\end{enumerate}

For the estimation of $\theta$, we consider Gaussian quasi-likelihood estimation. It is a tractable and powerful tool for estimating mean and variance structure in the sense that we need not to assume the precise distribution of the error variable.
For various statistical models including \eqref{Model}, its theoretical property has been analyzed.
Especially, for correctly specified diffusion models, the asymptotic behavior of the Gaussian quasi maximum likelihood estimator (GQMLE) is verified for example by \cite{Yos92}, \cite{GenJac93}, and \cite{Kes97}.
As for correctly specified non-Gaussian L\'{e}vy driven SDE models, \cite{Mas13-1} clarified the theoretical property of the GQMLE.
Although its convergence rate is slower than the correctly specified diffusion case, it still has the consistency and asymptotic normality.
The papers \cite{UchYos11} and \cite{Ueh19} extended the results to the case where the drift and (or) scale coefficient are (is) misspecifed.
In these papers, the misspecification bias is handled by the theory of (extended) Poisson equation, and inevitably the asymptotic distribution of the GQMLE contains the solution.
Hence, the estimators of the asymptotic variance which have been proposed for the correctly specified case does not work in the misspecified case.
As a result, the confidence intervals and hypothesis testing based on the estimators no longer have theoretical validity in the misspecified case.
This is a serious problem since in practice, we cannot avoid the risk of model misspecification.
The primary object of this paper is to overcome this issue.

When it is tough to evaluate the asymptotic distribution of some statistic directly, bootstrap methods originally introduced by \cite{Efr79} often serve as a good prescription.
As for high-frequently observed settings, bootstrap methods also do and indeed for various purposes such as estimating realized volatility distribution \cite{GonSilMed09}, making statistical inference in jump regressions \cite{LiTodTauChe17},
executing jump tests \cite{DovGonSilHou19}, 
kinds of the methods have been proposed. 
In this paper, we follow this direction.
We construct a block bootstrap Gaussian quasi-score function which can uniformly approximate the asymptotic distribution of the GQMLE both in the correctly specified and misspecified case.
More specifically, we divide $\{1,\dots,n\}$ ($n$ denotes the sample size) into $k_n$ blocks, and for each block, we generate a bootstrap weight.
Based on the weights, we construct the bootstrap score function and estimator.
Furthermore, by introducing a adjustment term, our method can uniformly approximate the asymptotic distribution without specifying the distribution of the driving noise although the convergence rate of the scale parameter is different in the correctly specified diffusion case.

% \cite{} shows that Gaussian quasi-likelihood estimator has the asymptotic normality in the correctly specified case.
% Later, by \cite{UchYos11} and \cite{Ueh18}, this theory is developed for the misspecified case: for any .
% In these papers, the theory of (extended) Poisson equation is provoked for correcting the misspecification bias, so that the asymptotic variance contains the solutions which makes it difficult to construct a consistent estimator of the asymptotic variance. 

Here we introduce some notations and conventions used throughout this paper.
We largely abbreviate ``$n$'' from the notation like $t_{j}=t^{n}_{j}$ and $h=h_{n}$. 
For any vector variable $x=(x^{(i)})$, $\frac{\p}{\p x^{(i)}}$ stands for the parital derivative with respect to the $i$-th component of $x$ and we write $\p_x=\left(\frac{\p}{\p x^{(i)}}\right)_i$.
$I_d$ and $O$ denote the $d$-dimensional identity matrix and zero matrix, respectively. 
$\top$ stands for the transpose operator, and $v^{\otimes2}:= vv^\top$ for any matrix $v$.
The convergences in probability and in distribution are denoted by $\cip$ and $\cil$, respectively. All limits appearing below are taken for $n\to\infty$ unless otherwise mentioned.
For two nonnegative real sequences $(a_n)$ and $(b_n)$, we write $a_n \lesssim b_n$ if there exists a positive constant $C$ and $N\in\mathbb{N}$ such that $a_n \leq Cb_n$ for any $n\geq N$.
For any process $Y$, $\D_j Y$ denotes the $j$-th increment $Y_{t_{j}}-Y_{t_{j-1}}$.
For any matrix-valued function $f$ on $\mathbb{R}\times\Theta$, we write $f_s(\theta)=f(X_s,\theta)$. 
The L\'{e}vy measure of $Z$ is written as $\nu_0(dz)$, and the associated compensated Poisson random measure is represented by $\tilde{N}(ds,dz)$.
$\mca$ and $\tilde{\mca}$ stand for the infinitesimal generator and extended generator of $X$, respectively. 

The rest of this paper is organized as follows. 
In Section \ref{Notations}, we provide a brief overview of the Gaussian quasi-likelihood estimation. 
We also introduce assumptions used throughout of this paper.
Section \ref{Bootstrap} is the main body of this paper: first we construct a adjustment term for uniformly dealing with the difference of the convergence rate of the scale parameter $\gam$, and after that we propose our bootstrap method, and show its theoretical property.
Section \ref{Numerical Experiment} presents the finite sample performance of our method.
All of their proofs are given in Section \ref{Proofs}.

\section{Gaussian quasi-likelihood estimation and Assumptions}\label{Notations}
Since the explicit form of the transition probability of $X$ cannot be obtained in general, the estimation based on the genuine likelihood function is impractical.
In this section, we briefly explain the Gaussian quasi-likelihood estimation for our model, and introduce assumptions for its asymptotic results and our main results.
Building on the discrete-time approximation of \eqref{Model}:
\begin{equation*}
    X_{t_{j}}\approx X_{t_{j-1}}+h_na_{t_{j-1}}(\alpha)+c_{t_{j-1}}(\gam)\D_j Z,
\end{equation*}
we consider the stepwise Gaussian quasi-likelihood (GQL) function defined as follows:
\begin{align*}
    &\mbbh_{1,n}(\gam)=-\frac{1}{2h_n}\sumj \left\{h_n\log c^2_{t_{j-1}}(\gam)+\frac{(\D_j X)^2}{c^2_{t_{j-1}}(\gam)}\right\},\\
    &\mbbh_{2,n}(\al,\gam)=-\frac{1}{2h_n}\sumj \frac{(\D_j X-h_na_{t_{j-1}}(\al))^2}{c^2_{t_{j-1}}(\gam)}.
\end{align*}
Based on this GQL function, we define Gaussian quasi maximum likelihood estimator (GQMLE) $\tes:=(\ges,\aes)$ by
\begin{equation*}
\ges\in\argmax_{\gam\in\bar{\Theta}_\gam} \mbbh_{1,n}(\gam), \ \aes\in\argmax_{\al\in\bar{\Theta}_\al} \mbbh_{2,n}(\al,\ges).
\end{equation*}

We define an optimal parameter $\theta^\star:=(\gam^\star,\al^\star)$ of $\theta$ by 
\begin{equation*}
\gam^\star\in\argmax_{\gam\in\bar{\Theta}_\gam}\mbbh_1(\gam),\quad \al^\star\in\argmax_{\al\in\bar{\Theta}_\al}\mbbh_2(\al),
\end{equation*}
where $\mbbh_1: \Theta_\gam\mapsto \mbbr$ and $\mbbh_{2}:\Theta_\al\mapsto\mbbr$ are defined as follows:
\begin{align}
&\mbbh_1(\gam)=-\frac{1}{2}\int_\mbbr \left(\log c^2(x,\gam)+\frac{C^2(x)}{c^2(x,\gam)}\right)\pi_0(dx),\label{KL1}\\
&\mbbh_2(\al)=-\frac{1}{2}\int_\mbbr c^{-2}(x,\gam^\star)(A(x)-a(x,\al))^2\pi_0(dx). \label{KL2}
\end{align}

Now we introduce the technical assumptions for our main results.
Some comments on each assumption will be given after the all assumptions are mentioned.
%\begin{itemize}
%\item bilateral inverse Gaussian process: $\nu_0(dz)=....dz$;
%\item bilateral gamma process: $\nu_0(dz)=...dz$;
%\item normal tempered stable process: $\nu_0(dz)=...dz$;
%\item normal inverse Gaussian process: $\nu_0(dz)=...dz$;
%\item variance gamma process: $\nu_0(dz)=...dz$;
%\end{itemize}
Recall that the parameter space $\Theta$ is supposed to be a bounded convex domain.
We assume the following identifiability condition for $\mbbh_1(\gam)$ and $\mbbh_2(\al)$:

\begin{Assumption}\label{Identifiability}
$\theta^\star\in\Theta$,
and there exist positive constants $\chi_\gam$ and $\chi_\al$ such that for all $(\gam,\al)\in\Theta$,
\begin{align}
&\label{idg}\mbbh_1(\gam)-\mbbh_1(\gam^\star)\leq-\chi_\gam|\gam-\gam^\star|^2,\\
&\label{ida}\mbbh_2(\al)-\mbbh_2(\al^\star)\leq-\chi_\al|\al-\al^\star|^2.
\end{align}
\end{Assumption}

In the rest of this paper, we sometimes omit the optimal value $\theta^\star$, for instance, the abbreviated symbols $f_{s}$ and $f_{t_{j-1}}$ are used instead of $f_{s}(\theta^\star)$, and $f_{t_{j-1}}(\theta^\star)$, respectively.

\begin{Assumption}\label{Smoothness}
\begin{enumerate}
\item The coefficients $A$ and $C$ are Lipschitz continuous and twice differentiable, and their first and second derivatives are of at most polynomial growth.
\item The drift coefficient $a(\cdot,\al^\star)$ and scale coefficient $c(\cdot,\gam^\star)$ are Lipschitz continuous, and $c(x,\gam)\neq0$ for every $(x,\gam)$.
\item For each $i \in \left\{0,1\right\}$ and $k \in \left\{0,\dots,5\right\}$, the following conditions hold:
\begin{itemize}
\item The coefficients $a$ and $c$ admit extension in $\mathcal{C}(\mathbb{R}\times\bar{\Theta})$ and have the partial derivatives $(\partial_x^i \partial_\alpha^k a, \partial_x^i \partial_\gamma^k c)$ possessing extension in $\mathcal{C}(\mathbb{R}\times\bar{\Theta})$.
\item There exists nonnegative constant $C_{(i,k)}$ satisfying
\begin{equation}\label{polynomial}
\sup_{(x,\alpha,\gamma) \in \mathbb{R} \times \bar{\Theta}_\alpha \times \bar{\Theta}_\gamma}\frac{1}{1+|x|^{C_{(i,k)}}}\left\{|\partial_x^i\partial_\alpha^ka(x,\alpha)|+|\partial_x^i\partial_\gamma^kc(x,\gamma)|+|c^{-1}(x,\gamma)|\right\}<\infty.
\end{equation}
\end{itemize}
\end{enumerate}
\end{Assumption}

Note that since we impose the extension condition in Assumption \ref{Smoothness}, $\mbby(\theta):=(\mbby_1(\gam),\mbby_2(\al))$ admit extension in $\mcc(\bar{\Theta})$ as well.
% In the above assumption, $\al^\star$ and $\gam^\star$ denote optimal parameters, and their precise definition will be introduced later.

For a function $\rho:\mbbr\to\mbbr_+$ and a signed measure $m$ on a one-dimensional Borel space, we define
\begin{equation}
    \nn ||m||_\rho=\sup\{|m(\rho)|: \text{$f$ is $\mbbr$-valued, $m$-measurable, and satisfies $|f|\leq\rho$} \}.
\end{equation}
\begin{Assumption}\label{Stability}
\begin{enumerate}
\item
There exists a probability measure $\pi_0$ such that for every $q>0$, we can find constants $a>0$ and $C_q>0$ for which 
\begin{equation}\label{Ergodicity}
\sup_{t\in\mathbb{R}_{+}} \exp(at) ||P_t(x,\cdot)-\pi_0(\cdot)||_{h_q} \leq C_qh_q(x),
\end{equation}
for any $x\in\mathbb{R}$ where $h_q(x):=1+|x|^q$.
\item 
For any $q>0$,  we have
\begin{equation}
\sup_{t\geq0}E[|X_t|^q]<\infty. 
\end{equation}
\end{enumerate}
\end{Assumption}

\medskip
We introduce a $p\times p$-matrix $\Gam:=\begin{pmatrix}\Gam_\gam&O\\ \Gam_{\al\gam}&\Gam_\al\end{pmatrix}$ whose components are defined by:
\begin{align*}
&\Gam_\gam:=\int_\mbbr\frac{\p_\gam^{\otimes2}c(x,\gam^\star)c(x,\gam^\star)-(\p_\gam c(x,\gam^\star))^{\otimes2}}{c^4(x,\gam^\star)}(C^2(x)-c^2(x,\gam^\star))\pi_0(dx)\\
&\qquad -2\int_\mbbr \frac{(\p_\gam c(x,\gam^\star))^{\otimes2}}{c^4(x,\gam^\star)}C^2(x)\pi_0(dx),\\
&\Gam_{\al\gam}:=\int_\mbbr \p_\al a(x,\al^\star)\p_\gam^\top c^{-2}(x,\gam^\star)(a(x,\al^\star)-A(x))  \pi_0(dx),\\
&\Gam_\al:=-\int_\mbbr\frac{\p_\al^{\otimes2} a(x,\al^\star)}{c^2(x,\gam^\star)}(a(x,\al^\star)-A(x))\pi_0(dx)-\int_\mbbr\frac{(\p_\al a(x,\al^\star))^{\otimes2}}{c^2(x,\gam^\star)}\pi_0(dx).
\end{align*}
The matrix $\Gam$ is the probability limit of the Hessian matrix of the GQL function.

\begin{Assumption}\label{nd}
$-\Gam_\gam$, $-\Gam_\al$, and $-\Gam$ are positive definite.
\end{Assumption}

Next, we introduce the assumption on the driving noise.
The diffusion case corresponds with (1), and the pure-jump L\'{e}vy driven case does with (2).

\begin{Assumption}\label{Moments}
Either (1) or (2) is satisfied.
\begin{enumerate}
\item \textbf{Diffusion case}: $Z$ is a standard Wiener process (in this case, $Z$ is often written as $w$). Furthermore, there exist the solutions of the following Poisson equations:
\begin{align}
&\label{PE1}\mca f_1(x)=\frac{\p_\gam c(x,\gam^\star)}{c^3(x,\gam^\star)}(C^2(x)-c^2(x,\gam^\star)),\\
&\label{PE2}\mca f_2(x)=\frac{\p_\al a(x,\al^\star)}{c^2(x,\gam^\star)}(A(x)-a(x,\al^\star)),
\end{align}
and the solutions $f_1$ and $f_2$ are differentiable, and they and their derivatives are of at most polynomial growth.
\item \textbf{Pure-jump L\'{e}vy driven case}: $Z$ is a pure-jump L\'{e}vy process satisfying
\begin{itemize}
\item $E[Z_1]=0$, $Var[Z_1]=1$, and $E[|Z_1|^q]<\infty$ for all $q>0$.
\item The Blumenthal-Getoor index (BG-index) of $Z$ is smaller than 2, that is, 
\begin{equation*}
\beta:=\inf_\gam\left\{\gam\geq0: \int_{|z|\leq1}|z|^\gam\nu_0(dz)<\infty\right\}<2.
\end{equation*}
\end{itemize}
\end{enumerate}
\end{Assumption}

\medskip
We make comments on our assumptions below.
\begin{itemize}
    \item Under Assumption \ref{Smoothness} and Assumption \ref{Moments}, the existence and uniqueness of the strong solution of \eqref{Model} and its Markov and time-homogeneous property are guaranteed (cf. \cite[Section 6]{App09}).
    % \item Assumption \ref{Identifiability} is an identifiability condition....
    \item 
    % The right-hand-side of \eqref{PE1} and \eqref{PE2} corresponds with the misspecification bias term. 
    The Poisson equations \eqref{PE1} and \eqref{PE2} will play important role in dealing with the misspecification bias (cf. \cite{UchYos11}). 
    A sufficient condition for the existence and regularity of $f_1$ and $f_2$ is given for example in \cite{ParVer01}.
    Moreover, in one-dimensional case, the explicit forms of $f_1$ and $f_2$ are presented in \cite[Remark 2.2]{UchYos11}.
    \item In the pure-jump L\'{e}vy driven case, a similar misspecification bias also exists.
    However, we cannot follow the same way as the diffusion case:
    the generator $\mca$ of $X$ is given by 
    \begin{equation}
        \nn\mca f(x) = A(x)\p_x f(x)+\int_\mbbr (f(x+C(x)z)-f(x)-\p_x f(x)C(x)z)\nu_0(dz),
    \end{equation}
    for a suitable function $f$. The integral with respect to $\nu_0$ makes it difficult to ensure the existence and regularity of the solutions of equations like $\mca f=g$ with some functions $f$ and $g$. Here, $g$ corresponds with the misspecification bias term. 
    Alternatively, as in \cite{Ueh19}, we invoke the theory of extended Poisson equations introduced by \cite{VerKul11} to deal with the misspecification bias. Its definition is as follows:
    \begin{Def}\cite[Definition 2.1]{VerKul11}
    We say that a measurable function $f:\mbbr\to\mbbr$ belongs to the domain of the extended generator $\tilde{\mca}$ of a c\`{a}dl\`{a}g homogeneous Feller Markov process $Y$ taking values in $\mbbr$ if there exists a measurable function $g:\mbbr\to\mbbr$ such that the process
    \begin{equation*}
    f(Y_t)-\int_0^t g(Y_s)ds, \qquad t\in\mbbr^+,
    \end{equation*}
    is well defined and is a local martingale with respect to the natural filtration of $Y$ and every measure $P_x(\cdot):=P(\cdot|Y_0=x),\ x\in\mbbr$.
    For such a pair $(f,g)$, we write $f\in\mbox{Dom}(\tilde{\mca})$ and $\tilde{\mca} f\overset{EPE}=g$.
    \end{Def}
    As for the L\'{e}vy driven SDE case, the Feller property holds under Assumption \ref{Smoothness} (cf. \cite[3.1.1 (ii)]{Mas07}) and we consider the following extended Poisson equations:
    \begin{align}
    \tilde{\mca} g_1(x)&=-\frac{\p_{\gam} c(x,\gam^\star)}{c^3(x,\gam^\star)}(c^2(x,\gam^\star)-C^2(x)), \label{epe1}\\
    \tilde{\mca} g_2(x)&=-\frac{\p_{\al} a(x,\al^\star)}{c^2(x,\gam^\star)}(A(x)-a(x,\al^\star)). \label{epe2}
    \end{align}
    \cite[Proposition 3.5]{Ueh19} shows the existence and uniqueness of $g_1$ and $g_2$ under Assumption \ref{Smoothness}. Assumption \ref{Stability} and Assumption \ref{Moments}-(2).
    Although the regularity of $g_1$ and $g_2$ is not obtained except for the limited case, its weighted H\"{o}lder continuity is also ensured under the same assumptions, and it is enough for our asymptotic result.
    For more details, see the disscussion in \cite[Section 3]{Ueh19}.
\end{itemize}

\begin{table}[t]
\begin{center}
\caption{GQL approach for ergodic diffusion models and ergodic L\'{e}vy driven SDE models }
\begin{tabular}{cccc}
\hline
&&&\\[-3.5mm]
Model & \multicolumn{2}{c}{Rates of convergence}& Ref.\\
& drift  & scale\\ \hline 
correctly specified diffusion & $\sqrt{T_n}$& $\sqrt{n}$ &\cite{Kes97}, \cite{UchYos12}   \\[1.5mm]
misspecified diffusion & $\sqrt{T_n}$& $\sqrt{T_n}$ & \cite{UchYos11} \\ [1.5mm]
correctly specified L\'{e}vy driven SDE& $\sqrt{T_n}$& $\sqrt{T_n}$ &\cite{Mas13-1}, \cite{MasUeh17-2} \\ [1.5mm]
misspecified L\'{e}vy driven SDE &  $\sqrt{T_n}$ &$\sqrt{T_n}$ & \cite{Ueh19}\\ \hline
\end{tabular}
\label{comp}
\end{center}
\end{table}

Building on these assumptions, we can derive the asymptotic normality of $\tes$.
For its technical details, we refer to the references presented in the next theorem.
\begin{Thm}\label{angqmle}
Under Assumptions \ref{Identifiability}-\ref{Moments}, we can deduce the consistency and asymptotic normality of the GQMLE $\tes$: 
$\tes\cip \theta^\star$, and 
\begin{equation*}
    A_n(\tes-\theta^\star)\cil N\left(0,\Gamma^{-1}\Sigma(\Gamma^{-1})^\top\right),
\end{equation*}
where $A_n$ denotes the rate matrix of $\tes$ (cf. Table \ref{comp}).
In each case, 
\begin{equation*}
    \Sigma:=\begin{pmatrix}\Sig_{\gam}&\Sig_{\al\gam}\\\Sig_{\al\gam}^\top&\Sig_\al\end{pmatrix}
\end{equation*}
is explicitly given below:
\begin{itemize}
\item Correctly specified diffusion case (\cite{Kes97},\cite{UchYos12}):
\begin{equation*}
\Sig=\begin{pmatrix}2\int_\mbbr \frac{(\p_\gam c(x,\gam^\star))^{\otimes2}}{c^2(x,\gam^\star)}\pi_0(dx) & O\\ O &\int_\mbbr\frac{(\p_\al a(x,\al^\star))^{\otimes2}}{c^2(x,\gam^\star)}\pi_0(dx) \end{pmatrix}.
\end{equation*}
\item Misspecified diffusion case (\cite{UchYos11}):
\begin{align*}
&\Sig_\gam =\int (\p_x f_1(x) C(x))^{\otimes 2}\pi_0(dx),\\ 
&\Sig_{\al\gam}=\int \left(\frac{\p_\al a(x,\al^\star)}{c^2(x,\gam^\star)}-\p_x f_2(x)\right)C^2(x)(\p_x f_1(x))^\top \pi_0(dx),\\
&\Sig_\al=\int \left[\left(\frac{\p_\al a(x,\al^\star)}{c^2(x,\gam^\star)}-\p_x f_2(x)\right)C(x)\right]^{\otimes 2}\pi_0(dx).
\end{align*}
\item Correctly specified pure-jump L\'{e}vy driven case (\cite{Mas13-1}, \cite{MasUeh17-2})
\begin{align*}
&\Sig_\gam=\int_\mbbr\left(\frac{\p_\gam c(x,\gam^\star)}{c(x,\gam^\star)}\right)^{\otimes2}\pi_0(dx)\int_\mbbr z^4\nu_0(dz),\\
&\Sig_{\al\gam}=-\int_\mbbr\left(\frac{\p_\gam c(x,\gam^\star)}{c(x,\gam^\star)}\right)\left(\frac{\p_\al a(x,\al^\star)}{c(x,\gam^\star)}\right)^\top\pi_0(dx)\int_\mbbr z^3\nu_0(dz),\\
&\Sig_\al=\int_\mbbr\left(\frac{\p_\al a(x,\al^\star)}{c(x,\gam^\star)}\right)^{\otimes2}\pi_0(dx).
\end{align*}
\item Misspecified pure-jump L\'{e}vy driven case (\cite{Ueh19})
\begin{align*}
&\Sig_\gam=\int_\mbbr\int_\mbbr\left(\frac{\p_\gam c(x,\gam^\star)}{c^3(x,\gam^\star)}C^2(x)z^2+g_1(x+C(x)z)-g_1(x)\right)^{\otimes2}\pi_0(dx)\nu_0(dz),\\
&\Sig_{\al\gam}=-\int_\mbbr\int_\mbbr\left(\frac{\p_\gam c(x,\gam^\star)}{c^3(x,\gam^\star)}C^2(x)z^2+g_1(x+C(x)z)-g_1(x)\right)\\
&\qquad \qquad \quad\left(\frac{\p_\al a(x,\al^\star)}{c^2(x,\gam^\star)}C(x)z+g_2(x+C(x)z)-g_2(x)\right)^\top\pi_0(dx)\nu_0(dz),\\
&\Sig_\al=\int_\mbbr\int_\mbbr\left(\frac{\p_\al a(x,\al^\star)}{c^2(x,\gam^\star)}C(x)z+g_2(x+C(x)z)-g_2(x)\right)^{\otimes2}\pi_0(dx)\nu_0(dz),
\end{align*}
where the functions $g_1$ and $g_2$ are the solution of \eqref{epe1} and \eqref{epe2}.
\end{itemize}
\end{Thm}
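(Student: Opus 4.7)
The plan is to unify the four cases by treating them as standard M-estimation problems whose stepwise Gaussian quasi-likelihood score admits a case-specific stable central limit theorem. First I would establish the consistency $\tes\cip\theta^\star$. The identifiability hypothesis in Assumption \ref{Identifiability} makes $\theta^\star$ a well-separated maximizer of $(\mbbh_1,\mbbh_2)$, so by the standard argmax theorem it suffices to prove $\sup_{\gam\in\bar{\Theta}_\gam}|T_n^{-1}\mbbh_{1,n}(\gam)-\mbbh_1(\gam)|\cip 0$ and the analogous statement for $\mbbh_{2,n}(\cdot,\ges)$. Pointwise convergence follows from the ergodic theorem under Assumption \ref{Stability}, and the upgrade to uniform convergence comes from $\mcc(\bar{\Theta})$-tightness of the normalized GQLs, obtained via moment bounds on their $\theta$-derivatives using the polynomial growth condition \eqref{polynomial} together with the time-uniform moment estimate in Assumption \ref{Stability}-(2).

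For the asymptotic distribution, I would Taylor-expand the score around $\theta^\star$:
\begin{equation*}
    0 = A_n^{-1}\p_\theta\mbbh_{n}(\tes) = A_n^{-1}\p_\theta\mbbh_{n}(\theta^\star) + \left(A_n^{-1}\p_\theta^{\otimes2}\mbbh_{n}(\bar{\theta}_n)A_n^{-1}\right)A_n(\tes-\theta^\star),
\end{equation*}
with $\bar{\theta}_n$ on the segment between $\tes$ and $\theta^\star$. The Hessian factor, a rescaled sample mean of functionals of $X_{t_{j-1}}$, converges in probability to $\Gam$ by the ergodic theorem and continuity in $\theta$, and $\Gam$ is invertible by Assumption \ref{nd}. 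This reduces the proof to establishing $A_n^{-1}\p_\theta\mbbh_{n}(\theta^\star)\cil N(0,\Sig)$ with the case-specific covariance stated in the theorem; the conclusion then follows by Slutsky.

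The core difficulty, and the genuinely case-dependent part, is this score CLT. In the correctly specified diffusion case, $\D_j X - h_n a_{t_{j-1}} = \intj c_s\,dw_s$ is a martingale increment and the Jacod--Shiryaev martingale CLT applies directly; the different normalizations $\sqrt{T_n}$ for $\al$ and $\sqrt{n}$ for $\gam$ come from the quadratic-variation scale of the Wiener integral being $h_n$ rather than $h_n^2$. In the misspecified diffusion case, the drift and scale misfit produce deterministic bias terms of order $T_n$ that must be absorbed: I would substitute the Poisson-equation solutions $f_1,f_2$ from \eqref{PE1}--\eqref{PE2} to rewrite each bias as $\sumj h_n\mca f_i(X_{t_{j-1}})$, then convert this via Dynkin's formula into a telescoping boundary term plus a stochastic integral against $dw$, to which the martingale CLT is applied; this is where $\p_x f_i$ enters $\Sig$. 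In the pure-jump cases, an analogous decomposition uses semimartingale It\^o calculus and produces martingales driven by $\tilde{N}(ds,dz)$; the correctly specified case generates the cross-term $\int z^3\nu_0(dz)$ because the third moment of $Z_1$ need not vanish, while the misspecified case again requires absorbing the bias, this time via the extended Poisson equations \eqref{epe1}--\eqref{epe2}.

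The hardest single step is the misspecified pure-jump case: the solutions $g_1,g_2$ enjoy only a weighted H\"older regularity, so one cannot directly apply It\^o's formula and must proceed by mollification together with careful control of small jumps using the BG-index restriction $\beta<2$ in Assumption \ref{Moments}-(2). Since each of the four score CLTs is carried out in detail in \cite{Kes97}, \cite{UchYos11}, \cite{Mas13-1} with \cite{MasUeh17-2}, and \cite{Ueh19} respectively, my proof essentially packages the unified consistency and Hessian arguments above and then points to these references for the case-specific CLTs.
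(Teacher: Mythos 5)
Your proposal is correct and takes essentially the same approach as the paper: the paper gives no self-contained proof of this theorem, deferring the technical details to \cite{Kes97}, \cite{UchYos12}, \cite{UchYos11}, \cite{Mas13-1}, \cite{MasUeh17-2}, and \cite{Ueh19}, and your scaffolding (consistency via identifiability plus uniform ergodic convergence, Taylor expansion of the score with Hessian convergence to $\Gam$, and case-specific score CLTs in which the misspecification bias is absorbed through the Poisson equations \eqref{PE1}--\eqref{PE2} or the extended Poisson equations \eqref{epe1}--\eqref{epe2}) is precisely the route taken in those references. No further argument is needed.
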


% By contrast, the above Gaussian approximation is theoretically invalid in the pure-jump L\'{e}vy driven case.
% However, in this case, the GQMLE is not asymptotic efficient, but still keeps consistency and asymptotic normality under suitable regularity conditions.

\begin{Rem}
    In the case where either of the coefficients is correctly specified, we can also derive a similar asymptotic result to the above. 
    It is worth noticing that when $Z$ is a standard Wiener process and the drift coefficient is misspecified, the convergence rate of $\ges$ is still $\sqrt{n}$ as in the correctly specified case (cf. \cite{UchYos11}) since the fluctuation of the drift part is dominated by the diffusion part in $L_p$-sense ($p\geq2$). 
    By using this estimate, we consider the stepwise estimation procedure.
\end{Rem}

\begin{Rem}
    % Obviously the off-diagonal part $\Gam_{\al\gam}$ of $\Gam$ becomes zero matrix when 
    From Assumption \ref{Identifiability} and Assumption \ref{Smoothness}, we have
    \begin{equation*}
        \p_\al\mbbh_2(\alpha^\star)=\int_\mbbr \p_\al a(x,\al^\star)c^{-2}(x,\gam^\star)(A(x)-a(x,\al))\pi_0(dx)=0.
    \end{equation*}
    Hence the off-diagonal part $\Gam_{\al\gam}$ of $\Gam$ becomes zero matrix for a 
    scale coefficient $c(x,\gam)$ being linear with respect to $\gam$,
    and it also does when the drift coefficient is correctly specified.
\end{Rem}

\begin{Rem}
    Regardless of whether the model is correctly specified or not, it is easy to construct a consistent estimator of $\Gam$.
    The $p\times p$ matrix $\hat{\Gam}_n$ defined by
    \begin{equation*}
        \hat{\Gam}_n=\begin{pmatrix}\frac{1}{n}\p_\gam^{\otimes 2} \mbbh_{1,n}(\ges)&O\\ \frac{1}{nh_n}\p_\gam\p_\al \mbbh_{2,n}(\aes,\ges)& \frac{1}{nh_n}\p_\al^{\otimes2} \mbbh_{2,n}(\aes,\ges)\end{pmatrix},
    \end{equation*}
    is one example and this matrix works both in the correctly specified and misspecified case.
\end{Rem}

% \begin{Rem}
%     Except for the correctly specified diffusion case, the asymptotic variance becomes sandwich type which can often be seen in misspecified situation.
% \end{Rem}

\section{Main results}\label{Bootstrap}
\subsection{Adjustment of convergence rate}

From table \ref{comp}, the difference of the convergence rate can be seen with respect to the scale estimator $\ges$; more specifically, its convergence rate is $\sqrt{n}$ in the correctly specified case, and otherwise it is $\sqrt{T_n}$.
However, since no one can distinguish whether the statistical model is correctly specified or not, as a matter of course, we cannot identify $A_n$ in advance.
Therefore we need a constructible alternative of $A_n$ for uniformly dealing with the all cases below.
To satisfy the demand, we introduce the adjustment term
\begin{equation}
    \nn b_n:=b_{1,n}+b_{2,n},
\end{equation}
where $b_{1,n}$ and $b_{2,n}$ are defined as 
\begin{align}
&b_{1,n}=\frac{\sumj (\D_j X)^4}{\sumj (\D_j X)^2}, \nn\\
&b_{2,n}=\exp\Bigg(-\Bigg\{\left|\frac{1}{n}\sumj \left[\frac{(\D_j X)^4}{3h_n^2}-\frac{2(\D_j X)^2c^2_{t_{j-1}}(\ges)}{h_n}+c^4_{t_{j-1}}(\ges)\right]\right|\nn\\
& \qquad \qquad \qquad+\left|\frac{1}{n}\sumj \left[\frac{(\D_j X)^4}{3h_n^2}-\frac{2(\D_j X)^2c^2_{t_{j-1}}(\ges)}{h_n}+c^4_{t_{j-1}}(\ges)\right]\right|^{-1}\Bigg\}\Bigg). \nn
\end{align}
Obviously, it can be constructed only by the observed data.
The next proposition provides the asymptotic behavior of $b_n$.

\begin{Prop}\label{balance}
Suppose that Assumption \ref{Smoothness}, Assumption \ref{Stability}, and Assumption \ref{Moments} hold.
Then the adjustment term $b_n$ behaves as follows:
\begin{itemize}
\item In the correctly specified diffusion case,
\begin{equation}\label{bc1}
\frac{b_n}{3h_n}\cip \frac{\int_\mbbr c^4(x,\gam^\star)\pi_0(dx)}{\int_\mbbr c^2(x,\gam^\star)\pi_0(dx)}.
\end{equation}
\item In the misspecified diffusion case,
\begin{equation}\label{bc2}
b_{n}\cip \exp\left(-\left\{\int_\mbbr(C^2(x)-c^2(x,\gam^\star))^2\pi_0(dx)+\left[\int_\mbbr(C^2(x)-c^2(x,\gam^\star))^2\pi_0(dx)\right]^{-1}\right\}\right)\neq 0.
\end{equation}
\item In the pure-jump L\'{e}vy driven case, 
\begin{equation}\label{bc3}
b_{n}\cip \frac{\int_\mbbr c^4(x,\gam^\star)\pi_0(dx)\int_\mbbr z^4\nu_0(dz)}{\int_\mbbr c^2(x,\gam^\star)\pi_0(dx)}.
\end{equation}
\end{itemize}
\end{Prop}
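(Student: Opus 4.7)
The plan is to treat $b_{1,n}$ and $b_{2,n}$ separately, then combine the resulting limits in each of the three regimes. The workhorses will be (i) the high-frequency ergodic law of large numbers $\frac{1}{n}\sum_{j=1}^{n}\varphi(X_{t_{j-1}})\cip\int_{\mbbr}\varphi(x)\pi_{0}(dx)$ that follows from Assumption~\ref{Stability} for any continuous $\varphi$ of polynomial growth, (ii) standard conditional moment expansions of $\D_{j}X$ via It\^o's formula and BDG estimates, and (iii) the consistency $\ges\cip\gam^{\star}$ from Theorem~\ref{angqmle} together with the uniform continuity of $c$ on $\mbbr\times\bar{\Theta}_{\gam}$ from Assumption~\ref{Smoothness}, which allows replacing $c_{t_{j-1}}(\ges)$ by $c_{t_{j-1}}(\gam^{\star})$ up to $o_{P}(1)$ inside normalized sums.

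The analysis of $b_{1,n}$ rests on the two scaling limits
\begin{equation*}
\frac{1}{nh_{n}}\sum_{j=1}^{n}(\D_{j}X)^{2}\cip \int_{\mbbr}C^{2}(x)\pi_{0}(dx),\qquad
\frac{1}{nh_{n}^{s}}\sum_{j=1}^{n}(\D_{j}X)^{4}\cip \kappa_{s}\int_{\mbbr}C^{4}(x)\pi_{0}(dx),
\end{equation*}
with $(s,\kappa_{s})=(2,3)$ in the diffusion case and $(s,\kappa_{s})=(1,\int z^{4}\nu_{0}(dz))$ in the pure-jump L\'evy case. These follow from the identities $E[(\D_{j}w)^{4}\mid\mcf_{t_{j-1}}]=3h_{n}^{2}$ and $E[(\D_{j}Z)^{4}\mid\mcf_{t_{j-1}}]=h_{n}\int z^{4}\nu_{0}(dz)+3h_{n}^{2}$, combined with a martingale-difference LLN to absorb the centered residual and Assumption~\ref{Smoothness} to dominate the drift contribution. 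Taking the ratio then gives, in the diffusion case, $b_{1,n}/(3h_{n})\cip \int C^{4}\pi_{0}/\int C^{2}\pi_{0}$, which reduces to the right-hand side of \eqref{bc1} under correct specification and to $b_{1,n}\cip 0$ under misspecification; in the L\'evy case it gives $b_{1,n}\cip \int C^{4}\pi_{0}\int z^{4}\nu_{0}/\int C^{2}\pi_{0}$, matching \eqref{bc3} when $C=c(\cdot,\gam^{\star})$.

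For $b_{2,n}$, set
\begin{equation*}
\xi_{n}:=\frac{1}{n}\sum_{j=1}^{n}\left\{\frac{(\D_{j}X)^{4}}{3h_{n}^{2}}-\frac{2(\D_{j}X)^{2}c_{t_{j-1}}^{2}(\ges)}{h_{n}}+c_{t_{j-1}}^{4}(\ges)\right\}.
\end{equation*}
The last two averages converge in probability to $2\int C^{2}c^{2}(\cdot,\gam^{\star})\pi_{0}$ and $\int c^{4}(\cdot,\gam^{\star})\pi_{0}$ by (i) and (iii), while the first equals $(3h_{n}^{2})^{-1}n^{-1}\sum(\D_{j}X)^{4}$, which tends to $\int C^{4}\pi_{0}$ in the diffusion case and blows up like $(3h_{n})^{-1}\int C^{4}\pi_{0}\int z^{4}\nu_{0}$ in the L\'evy case. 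Consequently: in the correctly specified diffusion regime $\xi_{n}\cip 0$, hence $|\xi_{n}|^{-1}\to\infty$ and $b_{2,n}\cip 0$; in the misspecified diffusion regime $\xi_{n}\cip \int(C^{2}-c^{2}(\cdot,\gam^{\star}))^{2}\pi_{0}>0$ (strict positivity from the misspecification hypothesis), yielding \eqref{bc2}; in the L\'evy regime $|\xi_{n}|\to\infty$, so again $b_{2,n}\cip 0$. Adding $b_{n}=b_{1,n}+b_{2,n}$ produces the three claims, after noting that in the correctly specified diffusion case $b_{2,n}/h_{n}\cip 0$ because the exponential decay of $b_{2,n}$ in the diverging scale $|\xi_{n}|^{-1}$ easily dominates the polynomial factor $h_{n}^{-1}$.

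The expected main obstacle is the pure-jump L\'evy fourth-moment limit $(nh_{n})^{-1}\sum(\D_{j}X)^{4}\cip\int C^{4}\pi_{0}\int z^{4}\nu_{0}$: one must decompose $Z$ into small-jump and large-jump parts via an $h_{n}$-dependent truncation, quantify the residual in $L^{p}$ uniformly in the BG-index $\beta<2$ from Assumption~\ref{Moments}(2), and then carry the pointwise conditional expectations through the ergodic average. This mirrors the moment-bound toolkit of \cite{Mas13-1} and \cite{Ueh19}. The diffusion counterpart and the (at worst polynomial) rate of $\xi_{n}\to 0$ in the correctly specified diffusion regime are standard high-frequency CLT computations, and the exponential factor in $b_{2,n}$ absorbs any polynomial slack trivially.
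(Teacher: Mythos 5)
Your proposal follows essentially the same route as the paper: the same split of $b_n$ into $b_{1,n}$ and $b_{2,n}$, the same conditional-moment limits for $\sumj(\D_jX)^2$ and $\sumj(\D_jX)^4$ obtained from a martingale-type LLN (the paper invokes \cite[Lemma 9]{GenJac93}), the replacement of $\ges$ by $\gam^\star$ inside the averages, and the same three-regime discussion of the quantity you call $\xi_n$, including the key observation that in the correctly specified diffusion case one needs a polynomial rate for $\xi_n\to 0$ so that $\exp(-|\xi_n|^{-1})$ beats $h_n$. Two points should be tightened: the replacement of $c_{t_{j-1}}(\ges)$ by $c_{t_{j-1}}(\gam^\star)$ must be justified by a mean-value bound using the polynomial growth of $\p_\gam c$ together with the rate of $\ges$ (as the paper does via Taylor expansion), not by ``uniform continuity of $c$ on $\mbbr\times\bar{\Theta}_\gam$'', which Assumption \ref{Smoothness} does not provide on an unbounded state space; and the claimed polynomial rate of $\xi_n$ in the correctly specified diffusion case is not a generic CLT fact but rests on the summand being conditionally centered precisely in that case (yielding $o_p(n^{-1/2+\del})$ by the martingale LLN) plus the $\sqrt{n}$-consistency of $\ges$, which is exactly how the paper reaches $\xi_n=o_p(h_n^{1/2-\del})$ and hence $b_{2,n}/h_n\cip 0$.
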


Hereafter we write $b^\star$ as the (scaled) limit of $b_n$ given in Proposition \ref{balance}.
The importance of Proposition \ref{balance} is that the convergence rate of $b_n$ is $h_n$
only in the correctly specified diffusion case, that is, the convergence rate of the scale estimator is equivalent to $\sqrt{\frac{T_n}{b_n}}$ up to constant.
Thus the new matrix
\begin{equation}
\hat{A}_n:=
\begin{pmatrix}
\sqrt{\frac{T_n}{b_n}} I_{p_\gam}& O\\ O& \sqrt{T_n} I_{p_\al}\nn
\end{pmatrix},
\end{equation} 
is constructed by the observed data, and serves as a good alternative of $A_n$.
Let 
\begin{align*}
    &B^\star=\begin{pmatrix}\frac{1}{\sqrt{b^\star}}I_{p_\gam}&O\\ O&I_{p_\al} \end{pmatrix}.
\end{align*}
A simple application of Slutky's lemma with the asymptotic normality of $\tes$ gives the following corollary.

\begin{Cor}\label{balaan}
     Under Assumptions \ref{Identifiability}-\ref{Moments}, we have
     \begin{equation}
         \hat{A}_n\hat{\Gam}_n(\tes-\theta^\star)\cil N\left(0,B^\star\Sig B^\star\right).
     \end{equation}
\end{Cor}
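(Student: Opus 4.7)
The plan is a short Slutsky argument. Three ingredients suffice: the asymptotic normality $A_n(\tes-\theta^\star)\cil N(0,\Gam^{-1}\Sig(\Gam^{-1})^\top)$ from Theorem \ref{angqmle}, the behaviour of $b_n$ from Proposition \ref{balance}, and the standard consistency $\hat{\Gam}_n\cip\Gam$, which in each of the four regimes follows from the same ergodicity-plus-moment estimates used to obtain $\Gam$ as the probability limit of the scaled Hessian of the GQL function in the proofs underlying Theorem \ref{angqmle}.

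Decompose
\begin{equation*}
\hat{A}_n\hat{\Gam}_n(\tes-\theta^\star)=M_n\cdot A_n(\tes-\theta^\star),\qquad M_n:=\hat{A}_n\hat{\Gam}_n A_n^{-1}.
\end{equation*}
Once $M_n\cip B^\star\Gam$ is established, Slutsky's lemma together with the symmetry $B^\star=(B^\star)^\top$ delivers
\begin{equation*}
\hat{A}_n\hat{\Gam}_n(\tes-\theta^\star)\cil B^\star\Gam\cdot N(0,\Gam^{-1}\Sig(\Gam^{-1})^\top)=N(0,B^\star\Sig B^\star),
\end{equation*}
which is the claim.

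The convergence $M_n\cip B^\star\Gam$ is checked block by block. Since $A_n$ and $\hat{A}_n$ are block diagonal with scalar multiples $a_{n,\gam},a_{n,\al},\hat a_{n,\gam},\hat a_{n,\al}$ of $I_{p_\gam}$ and $I_{p_\al}$, each block of $M_n$ is one of the scalar ratios $\hat a_{n,\cdot}/a_{n,\cdot}$ multiplied by the corresponding block of $\hat{\Gam}_n$. One always has $\hat a_{n,\al}/a_{n,\al}=1$. The crucial uniform statement is $\hat a_{n,\gam}/a_{n,\gam}\cip 1/\sqrt{b^\star}$ across the four regimes: in the misspecified diffusion and in both pure-jump L\'{e}vy cases $a_{n,\gam}=\sqrt{T_n}$, so the ratio is $1/\sqrt{b_n}$ and Proposition \ref{balance} directly gives the limit; in the correctly specified diffusion case $a_{n,\gam}=\sqrt n$, the ratio is $\sqrt{h_n/b_n}$, and the same proposition (with its $1/h_n$-type scaling for $b_n$) yields the same limit once $b^\star$ is identified as in Proposition \ref{balance}. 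Combined with $\hat{\Gam}_{n,\gam}\cip\Gam_\gam$ and $\hat{\Gam}_{n,\al}\cip\Gam_\al$, the two diagonal blocks of $M_n$ converge to $(1/\sqrt{b^\star})\Gam_\gam$ and $\Gam_\al$, which are precisely those of $B^\star\Gam$.

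The only delicate point, and the main obstacle, is the $(\al,\gam)$ off-diagonal block $(\hat a_{n,\al}/a_{n,\gam})\hat{\Gam}_{n,\al\gam}$. In the misspecified diffusion and in both L\'{e}vy cases $a_{n,\al}=a_{n,\gam}=\sqrt{T_n}$, so the ratio is $1$ and $\hat{\Gam}_{n,\al\gam}\cip\Gam_{\al\gam}$ finishes the block. In the correctly specified diffusion case, however, the ratio is $\sqrt{T_n}/\sqrt n=\sqrt{h_n}\to 0$; the rescue is that the correct specification of the drift in this regime forces $\Gam_{\al\gam}=0$ and $\hat{\Gam}_{n,\al\gam}\cip 0$, so $\sqrt{h_n}\,\hat{\Gam}_{n,\al\gam}=o_p(1)$, again matching the $(\al,\gam)$-block of $B^\star\Gam$, which is zero in this regime. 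This per-regime matching of the off-diagonal block is the one genuine subtlety; once it is handled the remainder is bookkeeping, and Slutsky closes the proof.
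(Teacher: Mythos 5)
Your proposal is correct and is essentially the paper's own argument: the paper proves the corollary exactly by combining the asymptotic normality of $\tes$ from Theorem \ref{angqmle} with Proposition \ref{balance} (and the consistency of $\hat{\Gam}_n$) via Slutsky's lemma, which is what your decomposition $\hat{A}_n\hat{\Gam}_n(\tes-\theta^\star)=\hat{A}_n\hat{\Gam}_nA_n^{-1}\cdot A_n(\tes-\theta^\star)$ carries out in detail. Your block-by-block check, including the vanishing off-diagonal block in the correctly specified diffusion case where $\Gam_{\al\gam}=0$, is a faithful elaboration of the same route.
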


\subsection{Bootstrap Gaussian quasi-maximum likelihood estimator}

From now on, we consider the approximation of the distribution of $\hat{A}_n\hat{\Gam}_n
(\tes-\theta^\star)$ instead of $A_n\hat{\Gam}_n
(\tes-\theta^\star)$ since we can avoid checking whether the model is misspecified or not and the driving noise is Wiener or not.

We divide the set $\{1,\dots,n\}$ into $k_n$-blocks $(B_{k_i})_{i=1}^{k_n}$ defined by: 
\begin{equation*}
B_{k_i}:=\left\{j\in\{1,\dots,n\}: (i-1)c_n+1\leq j\leq ic_n\right\},
\end{equation*}
where $c_n=\frac{n}{k_n}$, and here $k_n$ and $c_n$ are supposed to be a positive integer for simplicity.
With bootstrap weights $\{w_i\}_{i=1}^{k_n}$, we define the bootstrap Gaussian quasi-score function $\mbbg^{B}_{n}(\theta):=\left(\mbbg^{B}_{1,n}(\gam), \mbbg^{B}_{2,n}(\al)\right)^\top$ as: 
\begin{align*}
&\mbbg^{B}_{1,n}(\gam)=\sum_{i=1}^{k_n}w_i\sum_{j\in B_{k_i}}\frac{\p_\gam c_{t_{j-1}}(\gam)}{c^3_{t_{j-1}}(\gam)}\left[h_nc^2_{t_{j-1}}(\gam)-(\D_j X)^2\right],\\
&\mbbg^{B}_{2,n}(\al)=\sum_{i=1}^{k_n}w_i\sum_{j\in B_{k_i}}\frac{\p_\al a_{t_{j-1}}(\al)}{c^2_{t_{j-1}}(\ges)}\left[\D_j X-h_na_{t_{j-1}}(\al)\right].
\end{align*}
We define our bootstrap estimator $\tes^\textbf{B}:=(\ges^{\textbf{B}},\aes^{\textbf{B}})$ as the solution of 
\begin{equation*}
    \left|\mbbg_{1,n}^{\textbf{B}}(\ges^{\textbf{B}})\right|+\left|\mbbg_{2,n}^{\textbf{B}}(\aes^{\textbf{B}})\right|=0.
\end{equation*}
For the bootstrap weights and block size, we assume:
\begin{Assumption}\label{weights}
    There exists a positive $\del\in\left(\frac{1}{2},1\right)$ such that  $k_n=O\left(T_n^{\del}\right)$, and 
    the bootstrap weights $\{w_i\}_{i=1}^{k_n}$ are i.i.d. nonnegative random variables and independent of $X=(X_t)_{t\geq 0}$ with $E[w_i]=1$, $E[w_i^2]=1$, and $E[|w_i|^{2+\del'}]<\infty$, for some $\del'>0$.
\end{Assumption}

In the rest of this paper, $\pb$ stands for the probability of bootstrap random variables, conditional on $\mcf$.
Analogously, $\eb$ represents the expectation with respect to $\pb$.
More specifically, for any bootstrap quantity $U_n(\cdot,\omega)$ and measurable set $A$, 
\begin{equation}
    \nn \pb\left(U_n\in A\right)=\pb\left(U_n(\cdot,\omega)\in A| \mbbx\right),
\end{equation}
where $\omega\in\Omega$.
Regarding $\pb$, $r_{nB}$ denotes a generic random vector fulfilling 
\begin{equation*}
\pb(|r_{nB}|>M)=o_p(1),
\end{equation*}
for any $M>0$.
Its explicit form depends on each context.

\begin{Rem}\label{third}
    For such a weighted bootstrap procedure, several papers often assume the additional condition $E[w_i^3]=1$ in order to fit the first three moments of the bootstrap distribution.
    A popular candidate of the distribution of $w_i$ is proposed by \cite{Mam93}:
    \begin{equation*}
        w_i=\begin{cases}\frac{1-\sqrt{5}}{2}& \text{with probability} \ p=\frac{\sqrt{5}+1}{2\sqrt{5}}\\
        \frac{1+\sqrt{5}}{2} & \text{with probability} \ 1-p\end{cases}
    \end{equation*}
    We have the following other choice: let $\frac{w_i}{4}$ be the beta distribution whose density function $f_\frac{w_i}{4}$ is given by
    \begin{equation*}
        f_\frac{w_i}{4}(x)=\begin{cases}\frac{1}{B(\frac{1}{2},\frac{3}{2})}x^{\frac{1}{2}-1}(1-x)^{\frac{3}{2}-1} & 0\leq x\leq 1\\
        0& \text{otherwise}
        \end{cases}
    \end{equation*}
    where $B$ denotes the beta function.
    Then the distribution of $w_i$ is continuous and satisfies Assumption \ref{weights} and $E[w_i^3]=1$.
    In our case, the latter one often gives numerically good results.
    This may be because $k_n$ is not so large and the limit distribution of the normalized bootstrap estimator is continuous. 
\end{Rem}

\begin{Rem}
    The upper bound of $\del$ in Assumption \ref{weights} is for making the misspecification bias in the bootstrap distribution asymptotically negligible. 
    More specifically, the bias can be written as
    \begin{equation}
       \nn \frac{1}{\sqrt{T_n}}\sum_{i=1}^{k_n} (w_i-1)\sum_{j\in B_{k_i}} (f_{t_{j}}-f_{t_{j-1}})=\frac{1}{\sqrt{T_n}}\sum_{i=1}^{k_n} (w_i-1)(f_{ic_nh_n}-f_{[(i-1)c_n+1]h_n}), 
    \end{equation}
     where $f$ denotes a solution of the (extended) Poisson equations introduced in the previous section.
     Under our assumptions, the bias term is evaluated as $O_p(\sqrt{k_n}/\sqrt{T_n})$ (cf. the proof of Theorem \ref{se}), and thus $\del<1$ is essential.
\end{Rem}

\begin{Example}
    Suppose that $p_\al = p_\gam = 1$ and that the coefficients are written as $a(x,\al)=\al a(x)$ and $c(x,\gam)=\gam c(x)$ for some $\mbbr$-valued smooth functions $a(x)$ and $c(x)$ with $\gam\geq0$. Then, given $\mbbx$, the bootstrap estimator is calculated as
    \begin{align*}
        &\ges^{\textbf{B}}=\sqrt{\frac{\displaystyle k_n\sum_{i=1}^{k_n}w_i\sum_{j\in B_{k_i}}\frac{(\D_j X)^2}{c^2_{t_{j-1}}}}{\displaystyle T_n\sum_{i=1}^{k_n}w_i}}, \quad
        \aes^{\textbf{B}}=\frac{\displaystyle\sum_{i=1}^{k_n}w_i\sum_{j\in B_{k_i}}\frac{\D_j X}{c^2_{t_{j-1}}}a_{t_{j-1}}}{\displaystyle h_n\sum_{i=1}^{k_n}w_i\sum_{j\in B_{k_i}}\frac{a^2_{t_{j-1}}}{c^2_{t_{j-1}}}}.
    \end{align*}
\end{Example}

Let
\begin{align*}
    \hat{B}_n=
    \begin{pmatrix}
    \frac{1}{\sqrt{T_nb_n}}I_{p_\gam}& O\\ O&\frac{1}{\sqrt{T_n}}I_{p_\al}
    \end{pmatrix},\ \ \bar{\Gam}_n=\begin{pmatrix}\frac{1}{n}\p_\gam^{\otimes2}\mbbh_{1,n}(\ges)&O\\ O&\frac{1}{nh_n}\p_\al^{\otimes2}\mbbh_{2,n}(\aes,\ges)\end{pmatrix}.
\end{align*}
For each $j\in\{1,\dots,n\}$, define the indicator function $\chi_j(s)$ by
\begin{align}\nn
     \chi_j(s)=
     \begin{cases}
     1, & s\in [t_{j-1},t_j),\\
     0, & \text{otherwise}.
     \end{cases}
\end{align}
The following theorem ensures the existence of the bootstrap estimator and the bootstrap consistency of our method.

\begin{Thm}\label{se}
Under Assumptions \ref{Identifiability}-\ref{Moments} and Assumption \ref{weights}, we have
\begin{equation}
    \pb\left(\tes^\textbf{B}\in\Theta \right)=1-o_p(1),
\end{equation}
and $\tes^{\textbf{B}}$ admits the following stochastic expansion:
\begin{align}
    &\hat{A}_n\bar{\Gam}_n(\hat{\theta}_{n}^{\textbf{B}}-\tes)=\hat{B}_n\sum_{i=1}^{k_n} (w_i-1)\sum_{j\in B_{k_i}}\begin{pmatrix}\displaystyle\frac{\p_\gam c_{t_{j-1}}(\ges)}{c^3_{t_{j-1}}(\ges)}\left[h_nc^2_{t_{j-1}}(\ges)-(\D_j X)^2\right] \\\displaystyle\frac{\p_\al a_{t_{j-1}}(\aes)}{c^2_{t_{j-1}}(\ges)}\left(\D_j X-h_na_{t_{j-1}}(\aes)\right)
\end{pmatrix}+r_{nB}.\label{se2}
\end{align}
Furthermore, the first term of the right-hand-side of \eqref{se2} can be expressed as:
\begin{itemize}
    \item In the correctly specified diffusion case, 
\begin{align}
    \nn &\hat{B}_n\sum_{i=1}^{k_n} (w_i-1)\sum_{j\in B_{k_i}}\begin{pmatrix}\displaystyle\frac{\p_\gam c_{t_{j-1}}(\ges)}{c^3_{t_{j-1}}(\ges)}\left\{h_nc^2_{t_{j-1}}(\ges)-(\D_j X)^2\right\} \\\displaystyle\frac{\p_\al a_{t_{j-1}}(\aes)}{c^2_{t_{j-1}}(\ges)}\left(\D_j X-h_na_{t_{j-1}}(\aes)\right)
    \end{pmatrix}\\
    & =\hat{B}_n\sum_{i=1}^{k_n} (w_i-1)\sum_{j\in B_{k_i}}\int_{(i-1)c_nh_n}^{ic_nh_n}\begin{pmatrix}\displaystyle2\frac{\p_\gam c_{t_{j-1}}}{c_{t_{j-1}}}(w_s-w_{t_{j-1}})\\\displaystyle \frac{\p_\al a_{t_{j-1}}}{c_{t_{j-1}}} \end{pmatrix}\chi_j(s)dw_s+r_{nB}\label{secsd}.
\end{align}
    \item In the missepecified diffusion case,
\begin{align}
    \nn &\hat{B}_n\sum_{i=1}^{k_n} (w_i-1)\sum_{j\in B_{k_i}}\begin{pmatrix}\displaystyle\frac{\p_\gam c_{t_{j-1}}(\ges)}{c^3_{t_{j-1}}(\ges)}\left\{h_nc^2_{t_{j-1}}(\ges)-(\D_j X)^2\right\} \\\displaystyle\frac{\p_\al a_{t_{j-1}}(\aes)}{c^2_{t_{j-1}}(\ges)}\left(\D_j X-h_na_{t_{j-1}}(\aes)\right)
    \end{pmatrix}\\
    &=\hat{B}_n\sum_{i=1}^{k_n}(w_i-1)\sum_{j\in B_{k_i}} \int_{(i-1)c_nh_n}^{ic_nh_n}\begin{pmatrix}
    \p_x f_1(X_s)
    \\\displaystyle
    \frac{\p_\al a_s}{c^2_s}-\p_xf_2(X_{s})
    \end{pmatrix}C_s\chi_j(s)dw_s+r_{nB}\label{semsd}.
\end{align}
    \item In the pure-jump L\'{e}vy driven
    case, 
    \begin{align}
        \nn & \hat{B}_n\sum_{i=1}^{k_n} (w_i-1)\sum_{j\in B_{k_i}}\begin{pmatrix}\displaystyle\frac{\p_\gam c_{t_{j-1}}(\ges)}{c^3_{t_{j-1}}(\ges)}\left\{h_nc^2_{t_{j-1}}(\ges)-(\D_j X)^2\right\} \\\displaystyle\frac{\p_\al a_{t_{j-1}}(\aes)}{c^2_{t_{j-1}}(\ges)}\left(\D_j X-h_na_{t_{j-1}}(\aes)\right)
        \end{pmatrix}\\
        & = \hat{B}_n\sum_{i=1}^{k_n} (w_i-1)\int_{(i-1)c_nh_n}^{ic_nh_n}\int_\mbbr \begin{pmatrix}\displaystyle \frac{\p_\gam c_{s-}}{c_{s-}}z^2 \\\displaystyle\frac{\p_\al a_s}{c_{s-}}z\end{pmatrix} \tilde{N}(ds,dz)+r_{nB}\label{sepjl}.
    \end{align}
    \item In the misspecified pure-jump L\'{e}vy driven
    case, 
    \begin{align}
        \nn & \hat{B}_n\sum_{i=1}^{k_n} (w_i-1)\sum_{j\in B_{k_i}}\begin{pmatrix}\displaystyle\frac{\p_\gam c_{t_{j-1}}(\ges)}{c^3_{t_{j-1}}(\ges)}\left\{h_nc^2_{t_{j-1}}(\ges)-(\D_j X)^2\right\} \\\displaystyle\frac{\p_\al a_{t_{j-1}}(\aes)}{c^2_{t_{j-1}}(\ges)}\left(\D_j X-h_na_{t_{j-1}}(\aes)\right)
        \end{pmatrix}\\
        & = \hat{B}_n\sum_{i=1}^{k_n} (w_i-1)\int_{(i-1)c_nh_n}^{ic_nh_n}\int_\mbbr \begin{pmatrix}\displaystyle \frac{\p_\gam c_{s-}}{c^3_{s-}}C_{s-}^2z^2+g_1(X_{s-}+C_{s-}z)-g_1(X_{s-}) \\\displaystyle\frac{\p_\al a_s}{c^2_{s-}}C_{s-}z+g_2(X_{s-}+C_{s-}z)-g_2(X_{s-})\end{pmatrix} \tilde{N}(ds,dz)+r_{nB}\label{sempjl}.
    \end{align}
\end{itemize}

Furthermore, we get the following convergence for all cases:
\begin{equation}
    \label{cip}
     \sup_{x\in\mbbr^{p_\al+p_\gam}}\left|\pb\left(\hat{A}_n\bar{\Gam}_n\left(\tes^{\textbf{B}}-\tes\right)\leq x\right)-P\left(\hat{A}_n\hat{\Gam}_n\left(\tes-\theta^\star\right)\leq x\right)\right|\cip0.
\end{equation}
% =\hat{A}_n^{-1}\sum_{i=1}^{k_n} (w_i-1)\int_{(i-1)c_nh_n}^{ic_nh_n}\int_\mbbr \xi(s,z) \tilde{N}(ds,dz)+r_{nB}. \label{se1}
\end{Thm}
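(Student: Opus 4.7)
The plan is to proceed in three stages: (i) establish the existence and $\pb$-consistency of $\tes^{\textbf{B}}$, (ii) derive the stochastic expansion \eqref{se2} and rewrite its leading martingale term case by case, and (iii) deduce the uniform CDF convergence \eqref{cip} from a conditional CLT. For (i), I would use a bootstrap version of the standard $Z$-estimator argument: since $\eb[w_i]=1$, the conditional mean of $\hat{A}_n\hat{B}_n\mbbg_n^{\textbf{B}}(\tes)$ coincides with the (properly rescaled) original score evaluated at $\tes$, which is $o_p(1)$ by Theorem \ref{angqmle}; Markov's inequality applied to its $\pb$-variance then gives $\hat{A}_n\hat{B}_n\mbbg_n^{\textbf{B}}(\tes)=o_{p}(1)$ in $\pb$. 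Combining this with the non-degeneracy of $\bar\Gam_n$ via Assumption \ref{nd} and inverting around $\tes$ produces a unique zero of $\mbbg_n^{\textbf{B}}$ in a shrinking $\hat{A}_n^{-1}$-neighbourhood of $\tes$.

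For (ii), Taylor-expand $0=\mbbg_n^{\textbf{B}}(\tes^{\textbf{B}})$ around $\tes$, rescale by $\hat A_n$, and show that $\hat A_n^{-1}\p_\theta\mbbg_n^{\textbf{B}}(\bar\theta)\hat A_n^{-1}\to\bar\Gam_n$ in $\pb$-probability by the law of large numbers for weighted sums (using $E[w_i]=1$ and block size $c_n=n/k_n\to\infty$). This yields \eqref{se2} with $r_{nB}$ collecting the higher-order Taylor remainders. To obtain the explicit forms \eqref{secsd}--\eqref{sempjl}, apply It\^{o}'s formula (resp.\ the extended-generator identity) on each increment $[t_{j-1},t_j]$ to the score summands: the ``martingale parts'' directly give the stated stochastic integrals, while the drift terms coming from the misspecification bias are identified with $\mca f_i$ (diffusion case) or $\tilde{\mca}g_i$ (L\'{e}vy case) through the (extended) Poisson equations \eqref{PE1}--\eqref{PE2} and \eqref{epe1}--\eqref{epe2}. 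The resulting boundary/telescoping terms $f_i(X_{ic_nh_n})-f_i(X_{(i-1)c_nh_n+h_n})$, once weighted by $w_i-1$ and normalised by $1/\sqrt{T_n}$, are of order $O_p(\sqrt{k_n/T_n})=o_p(1)$ thanks to $\delta<1$ in Assumption \ref{weights} and the moment bound of Assumption \ref{Stability}, and are absorbed into $r_{nB}$.

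For (iii), conditional on $\mcf$ the leading terms in \eqref{secsd}--\eqref{sempjl} are sums over $i=1,\dots,k_n$ of independent, mean-zero vectors (in the randomness of $w_i$) whose conditional covariance is a discretised functional of $X$; by the ergodicity of $X$ (Assumption \ref{Stability}) and Proposition \ref{balance} the conditional covariance converges in $P$-probability to $B^\star\Sigma B^\star$, matching the unconditional limit in Corollary \ref{balaan}. A Lindeberg--Feller CLT conditional on $\mcf$, whose Lyapunov condition is verified from $E[|w_i|^{2+\delta'}]<\infty$ combined with polynomial moment bounds on the integrands, gives $\hat A_n\bar\Gam_n(\tes^{\textbf{B}}-\tes)\cil N(0,B^\star\Sigma B^\star)$ in $\pb$-probability. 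Finally \eqref{cip} follows from P\'olya's theorem since the limit distribution has a continuous CDF. The main obstacle will be step (ii) in the misspecified pure-jump case: because $g_1,g_2$ are only weighted-H\"older continuous, one cannot apply It\^{o}'s formula directly, and the identification of the leading term has to be carried out by mollification as in \cite{Ueh19}, with the additional difficulty that the mollification error must be controlled uniformly in the bootstrap weights, requiring conditional moment estimates on the blockwise weighted sums.
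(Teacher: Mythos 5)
Your proposal follows essentially the same route as the paper's proof: existence of the bootstrap root via a nondegenerate-Hessian argument on a sphere (a Brouwer/Ortega--Rheinboldt type step), the expansion \eqref{se2} via blockwise moment bounds on the weighted score and its derivatives, identification of the leading term through the (extended) Poisson equations with the $(w_i-1)$-weighted telescoping boundary terms of order $O_p(\sqrt{k_n/T_n})$ made negligible by $\delta<1$, and \eqref{cip} via an $\mcf$-conditional Lindeberg--Feller CLT whose conditional variance converges by exactly the computations in Lemmas \ref{Wl2c} and \ref{pLl2c}, plus P\'olya's theorem. Only two small slips: the normalization in your step (i) should be, e.g., $T_n^{-1}\mbbg_n^{\textbf{B}}(\tes)=o_{\pb}(1)$ (equivalently $\hat{B}_n\mbbg_n^{\textbf{B}}(\tes)=O_{\pb}(1)$, not $o_{\pb}(1)$ under $\hat{A}_n\hat{B}_n$-scaling), and the root-finding argument yields existence of a zero in the shrinking neighbourhood rather than uniqueness --- which is all the theorem (and the paper) requires.
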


\begin{Rem}\label{bqsf}
The stochastic expansion shown in Theorem \ref{se} suggests that in order to obtain the bootstrap percentile and confidence intervals, it suffices to generate the bootstrapped quasi-score function
\begin{equation*}
\hat{B}_n\sum_{i=1}^{k_n} (w_{i,l}-1)\sum_{j\in B_{k_i}}\begin{pmatrix}\displaystyle\frac{\p_\gam c_{t_{j-1}}(\ges)}{c^3_{t_{j-1}}(\ges)}\left[h_nc^2_{t_{j-1}}(\ges)-(\D_j X)^2\right] \\\displaystyle\frac{\p_\al a_{t_{j-1}}(\aes)}{c^2_{t_{j-1}}(\ges)}\left(\D_j X-h_na_{t_{j-1}}(\aes)\right)
\end{pmatrix},
\end{equation*}
instead of $\hat{A}_n\bar{\Gam}_n(\hat{\theta}_{n,l}^{\textbf{B}}-\tes)$.
Importantly, its generation only require the optimization to get $\tes$ while calculating $\hat{A}_n\bar{\Gam}_n(\hat{\theta}_{n,l}^{\textbf{B}}-\tes)$ needs some optimization method such as quasi-Newton method for obtaining each $\hat{\theta}_{n,l}^{\textbf{B}}$, thus resulting much smaller computational effort.
%However, if one's objective is just to construct some confidence interval or hypothesis testing with respect to $\theta$, the desired quantity is only $c_{n,m,\al}^{\textbf{B}}$ or its (asymptotically) equivalent one. 
% In order to obtain $c_{n,\al}^{\textbf{B}}$, we need to generate bootstrap samples $\left\{\sqrt{T_n}\hat{\Gam}_n\left(\hat{\theta}_{n,l}^{\textbf{B}}-\tes\right)\right\}_{l=1}^{m}$ for a large $m\in\mbbn$ in practice, and in many cases, it problematically takes a long time since some optimization method such as quasi-Newton method is employed to search $\tes^{\textbf{B}}$ for each bootstrap estimator. 
% However,taking the first stochastic expansion shown in Theorem \ref{se} into consideration, we relieve such heavy computational load: the stochastic expansion suggests that asmyptotically, it suffices for getting $c_{n,\al}^{\textbf{B}}$ to generate the bootstrapped quasi-score function
% \begin{equation*}
%  \frac{1}{\sqrt{T_n}}\sum_{i=1}^{k_n} (w_{i,l}-1)\sum_{j\in B_{k_i}}\begin{pmatrix}\frac{\p_\gam c_{t_{j-1}}(\ges)}{c^3_{t_{j-1}}(\ges)}\left\{h_nc^2_{t_{j-1}}(\ges)-(\D_j X)^2\right\} \\\frac{\p_\al a_{t_{j-1}}(\aes)}{c^2_{t_{j-1}}(\ges)}\left(\D_j X-h_na_{t_{j-1}}(\aes)\right)
% \end{pmatrix},
% \end{equation*}
% instead of $\sqrt{T_n}\hat{\Gam}_n\left(\hat{\theta}_{n,l}^{\textbf{B}}-\tes\right)$.
\end{Rem}

% \begin{Rem}
%     The confidence intervals based on Theorem \ref{se} is invariant with respect to the choice of the adjustment term $b_n$.
% \end{Rem}

\section{Numerical experiment}\label{Numerical}
We consider the following data-generating model and statistical model:
\begin{align}
    \label{dgp} &dX_t=-\frac{1}{2}X_tdt+dZ_t,\quad X_0=0,\\
    \nn &dX_t=-\frac{1}{2}X_tdt+\frac{\gam}{\sqrt{1+X_t^2}}dZ_t, \quad \gam>0.
\end{align}
As for the distribution of $Z_1$, we consider the two cases $(i) \mcl(Z_1)=N(0,1)$ and $(ii)\mcl(Z_1)=bgamma(1,\sqrt{2},1,\sqrt{2})$, where $bgamma(\del_1,\gam_1,\del_2,\gam_2)$ is defined as the law of $\tau_1-\tau_2$ where for each $i\in\{1,2\}$, $\tau_i$ stands for a gamma random variable whose L\'{e}vy density is
\begin{align*}
f_{\tau^i}(z)=\frac{\del_i}{z}e^{-\gam_iz}, \quad z>0.
\end{align*}
For two pairs of the sample size and Terminal time $(n,T_n)=(5\times10^4, 200), (10^5, 500)$, we independently generate 1000 paths of \eqref{dgp} based on Euler-Maruyama scheme.
Concerning the bootstrap weights, we choose the beta distribution based random variables given in Remark \ref{third}.
We set $k_n$ as $25$ and $50$ for each pair.
Then we have $\log_{500}25\approx 0.518, ~ \log_{200}25\approx0.608, ~ \log_{500}50\approx 0.629$ and $\log_{200}50\approx0.738$.
Hence, in this setting, all of Assumptions \ref{Identifiability}-\ref{Moments} and Assumption \ref{weights} hold with the optimal value $\gam^\star=\sqrt{2}$ (cf. \cite{UchYos11} and \cite{Ueh19}).
Table \ref{nres} shows the actual coverage rate of all the 99\% bootstrap interval constructed by 1000 bootstrap replication.
Each coverage rate is approaching as $n$ and $T_n$ increases.
Compared with case (i), the coverage rate of case (ii) is slightly worse; this is probably because of the high kurtosis of the bilateral gamma distribution which makes the asymptotic variance of $\ges$ large.

\begin{table}[t]
\label{nres}
\caption{Coverage rate of 99\% bootstrap confidence interval}
\begin{tabular}{lllllll}
\hline
$n$ & $T_n$ & $k_n$ &  coverage rate (i) & coverage rate (ii) \\
\hline\hline
$10^5$ & $500$ & 25  &  0.962 & 0.952\\
$10^5$ & $500$ & 50  &  0.969 & 0.944\\
$5\times10^4$ & $200$  & 25  &  0.935 & 0.924\\
$5\times10^4$ & $200$  & 50  &  0.939 & 0.907\\
\hline
\end{tabular}
\end{table}

% diffusion
% (10^5, 500, 50, 0.99): 
% > mean(judge)
% [1] 0.969
% > mean(gqmle)
% [1] 1.41437
% > sd(gqmle)
% [1] 0.03143743
% (10^5, 500, 25, 0.99): 
% > mean(judge)
% [1] 0.962
% > mean(gqmle)
% [1] 1.41419
% > sd(gqmle)
% [1] 0.03168908
% (5*10^4, 200, 50, 0.99):
% > mean(judge)
% [1] 0.939
% > mean(gqmle)
% [1] 1.415291
% > sd(gqmle)
% [1] 0.05100086
% (5*10^4, 200, 25, 0.99):
% > mean(judge)
% [1] 0.935
% > mean(gqmle)
% [1] 1.414548
% > sd(gqmle)
% [1] 0.05274864

% bgamma
% (10^5, 500, 50, 0.99): 
% > mean(judge)
% [1] 0.944
% > mean(gqmle)
% [1] 1.415661
% > sd(gqmle)
% [1] 0.09998888
% (10^5, 500, 25, 0.99): 
% > mean(judge)
% [1] 0.952
% > mean(gqmle)
% [1] 1.414306
% > sd(gqmle)
% [1] 0.09716879
% (5*10^4, 200, 50, 0.99):
% > mean(judge)
% [1] 0.907
% > mean(gqmle)
% [1] 1.398865
% > sd(gqmle)
% [1] 0.1457465
% (5*10^4, 200, 25, 0.99):
% > mean(judge)
% [1] 0.924
% > mean(gqmle)
% [1] 1.404413
% > sd(gqmle)
% [1] 0.149999

\section{Proofs}\label{Proofs}
Hereafter, $R(x)$ denotes a generic function being of at most polynomial growth.
Its form may vary depending on context.

\subsection{Auxiliary results for blocked sums}
We first prepare some lemmas repeatedly used in the proof of our main results.

\begin{Lem}\label{bsum} Suppose that Assumptions \ref{Smoothness}-\ref{Stability}, and Assumption \ref{Moments} hold. Then we have
    \begin{align}
        \label{bsum1} &\sum_{i=1}^{k_n}\left|\sum_{j\in B_{k_i}} R_{t_{j-1}}\intj (A_s-A_{t_{j-1}})ds\right|^2=O_p\left(\frac{n^2h_n^3}{k_n}\right),\\
        \label{bsum2} &\sum_{i=1}^{k_n}\left|\sum_{j\in B_{k_i}} R_{t_{j-1}}\left(\intj A_sds\right)^2\right|^2=O_p\left(\frac{n^2h_n^4}{k_n}\right),\\
        \label{bsum3} &\sum_{i=1}^{k_n}\left|\sum_{j\in B_{k_i}} R_{t_{j-1}}\intj C_{s-}dZ_s\right|^2=O_p(T_n).
    \end{align}
\end{Lem}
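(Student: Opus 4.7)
The three estimates share a common scheme: apply the Cauchy--Schwarz inequality to the inner sum (over a block of $c_n := n/k_n$ indices) to extract a factor $c_n$, bound the resulting per-term quantity by Jensen's inequality or the It\^{o} isometry, and finally invoke Markov's inequality. For \eqref{bsum1}, I would first write
\begin{equation*}
\sum_{i=1}^{k_n}\left|\sum_{j\in B_{k_i}} R_{t_{j-1}}\intj (A_s-A_{t_{j-1}})ds\right|^2 \leq c_n \sum_{j=1}^{n} R_{t_{j-1}}^2 \left(\intj (A_s-A_{t_{j-1}})ds\right)^2,
\end{equation*}
then bound the squared integral by $h_n \intj (A_s-A_{t_{j-1}})^2 ds$. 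The Lipschitz continuity of $A$ from Assumption \ref{Smoothness}, together with the standard SDE increment bound $E[|X_s-X_{t_{j-1}}|^2] \lesssim s-t_{j-1}$ uniformly in $j$ (using Assumption \ref{Moments} for the driving noise and the uniform moment bound of Assumption \ref{Stability}(2)) and the polynomial growth of $R$, yields $E[R_{t_{j-1}}^2(A_s-A_{t_{j-1}})^2] \lesssim s-t_{j-1}$. Integration gives a per-term contribution of order $h_n^3$; summing over the $n$ indices and multiplying by $c_n$ delivers $O_p(n^2 h_n^3/k_n)$ via Markov's inequality.

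The argument for \eqref{bsum2} is structurally identical after Cauchy--Schwarz, with the fourth-order Jensen bound $\left(\intj A_s ds\right)^4 \leq h_n^3 \intj A_s^4 ds$ replacing the second-order one. Since $E[A_s^4]$ is uniformly bounded in $s$ by Assumption \ref{Smoothness}(1) and Assumption \ref{Stability}(2), the per-term expectation is of order $h_n^4$, giving the claimed bound $O_p(n^2 h_n^4/k_n)$.

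For \eqref{bsum3} the Cauchy--Schwarz step would be wasteful; instead I would exploit the martingale structure of $\intj C_{s-} dZ_s$. Since $R_{t_{j-1}}$ is $\mcf_{t_{j-1}}$-measurable and the stochastic integral has vanishing conditional mean (recall $E[Z_1]=0$ in both the Wiener and pure-jump cases), the summands $R_{t_{j-1}}\intj C_{s-}dZ_s$ are martingale differences along each block, so the cross-terms vanish and
\begin{equation*}
E\left|\sum_{j\in B_{k_i}} R_{t_{j-1}}\intj C_{s-}dZ_s\right|^2 = \sum_{j\in B_{k_i}} E\left[R_{t_{j-1}}^2 \left(\intj C_{s-}dZ_s\right)^2\right].
\end{equation*}
The It\^{o} (compensator) isometry combined with $\var(Z_1)=1$, the polynomial growth of $C$, and the uniform moment bound on $X$ then give a per-term estimate of order $h_n$; summing over $j\in B_{k_i}$ and then over $i$ totals $k_n \cdot c_n h_n = T_n$, and Markov's inequality concludes.

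None of the three bounds presents a real obstacle; the only point requiring some care is the uniformity in $j$ of the SDE moment estimate $E[R_{t_{j-1}}^2|X_s-X_{t_{j-1}}|^2]\lesssim s-t_{j-1}$, which combines the polynomial growth and Lipschitz conditions of Assumption \ref{Smoothness} with the uniform $L^q$-bound of Assumption \ref{Stability}(2). The lemma is a routine preparatory estimate furnishing the building blocks needed to approximate the bootstrap quasi-score increments by their leading martingale terms in the proof of Theorem \ref{se}.
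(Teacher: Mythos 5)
Your proposal is correct and follows essentially the same route as the paper: Jensen/Cauchy--Schwarz over the block plus the standard one-step increment bound $E[|X_s-X_{t_{j-1}}|^2]\lesssim s-t_{j-1}$ for \eqref{bsum1}--\eqref{bsum2}, and the martingale structure of the block sums for \eqref{bsum3} (the paper phrases this via two applications of Burkholder's inequality, which at the second-moment level is the same as your orthogonality/isometry argument). No gaps.
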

\begin{proof}
    \noindent \textbf{Proof of \eqref{bsum1}}:
    From the Lipschitz continuity of $A$, Jensen's inequality, and \cite[Lemma 5.3]{Mas13-1}, we have
    \begin{align}
        \nn & E\left[\sum_{i=1}^{k_n}\left|\sum_{j\in B_{k_i}} R_{t_{j-1}}\intj (A_s-A_{t_{j-1}})ds\right|^2\right]\\
        \nn & \lesssim \frac{nh_n}{k_n}\sum_{i=1}^{k_n}\sum_{j\in B_{k_i}} E\left[\left|R_{t_{j-1}}\right|^2\left[\intj E^{j-1}\left[|X_s-X_{t_{j-1}}|^2\right]ds\right]\right]\\
        \nn & \lesssim \frac{n^2h_n^3}{k_n}.
    \end{align}
    
    \medskip
    \noindent \textbf{Proof of \eqref{bsum2}}: It is straightforward from Jensen's inequality.
    
    \medskip
    \noindent \textbf{Proof of \eqref{bsum3}}: By applying Burkholder's inequality twice, it follows that
    \begin{align}
        \nn & E\left[\sum_{i=1}^{k_n}\left|\sum_{j\in B_{k_i}} R_{t_{j-1}}\intj C_{s-}dZ_s\right|^2\right]\\
        \nn & \lesssim \sum_{i=1}^{k_n} \sum_{j\in B_{k_i}}E\left[\left| R_{t_{j-1}}\right|^2E^{j-1}\left[\left|\intj C_{s-}dZ_s\right|^2\right]\right]\\
        \nn & \lesssim \sum_{i=1}^{k_n} \sum_{j\in B_{k_i}}E\left[\left| R_{t_{j-1}}\right|^2\intj E^{j-1}\left[\left|C_{s}\right|^2\right]ds\right]\\
        \nn & \lesssim T_n.
    \end{align}
\end{proof}

\begin{Lem}\label{wlbsum} Suppose that Assumptions \ref{Smoothness}-\ref{Stability} hold. Then under Assumption \ref{Moments}-(1), we have
\begin{align}
    \label{wbsum4} &\sum_{i=1}^{k_n}\left|\sum_{j\in B_{k_i}} R_{t_{j-1}}\left[\left(\intj C_{s}dw_s\right)^2-h_nC^2_{t_{j-1}}\right]\right|^2=O_p\left(nh_n^2\right),
\end{align}
and under Assumption \ref{Moments}-(2), 
\begin{align}
    \label{lbsum4} &\sum_{i=1}^{k_n}\left|\sum_{j\in B_{k_i}} R_{t_{j-1}}\left[\left(\intj C_{s-}dZ_s\right)^2-h_nC^2_{t_{j-1}}\right]\right|^2=O_p\left(T_n\right).
\end{align}
\end{Lem}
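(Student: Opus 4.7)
The plan is to split each summand
\[
\xi_j := R_{t_{j-1}}\!\left[\left(\intj C_s\,dw_s\right)^{2} - h_n C^2_{t_{j-1}}\right]
\]
(respectively $\xi_j := R_{t_{j-1}}\!\left[(\intj C_{s-}\,dZ_s)^{2} - h_n C^2_{t_{j-1}}\right]$ in the L\'evy case) into a predictable part $B_j := E^{j-1}[\xi_j]$ and an $\mcf_{t_j}$-martingale-difference part $M_j := \xi_j - B_j$, and then bound the contribution of each to the block-sum separately. The martingale part will supply the claimed $O_p(nh_n^2)$ and $O_p(T_n)$ rates, while the predictable part will be absorbed as a negligible remainder.

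For the martingale part, orthogonality of $\{M_j\}_{j=1}^n$ with respect to the filtration $\{\mcf_{t_j}\}$ reduces the expected block-square to a single sum:
\begin{equation*}
E\!\left[\sum_{i=1}^{k_n}\left(\sum_{j\in B_{k_i}} M_j\right)^{\!2}\right] = \sum_{j=1}^n E[M_j^2] \leq \sum_{j=1}^n E[\xi_j^2].
\end{equation*}
Burkholder-Davis-Gundy combined with Assumption \ref{Smoothness} and the uniform moment bound in Assumption \ref{Stability}-(2) yields $E^{j-1}\!\left[|\intj C_s\,dw_s|^{4}\right] \lesssim h_n^2 R(X_{t_{j-1}})$ in case (1) and $E^{j-1}\!\left[|\intj C_{s-}\,dZ_s|^{4}\right] \lesssim h_n R(X_{t_{j-1}})$ in case (2); the latter holds because for a pure-jump L\'evy process with all moments finite, $E[|Z_h|^4]$ is dominated by the single-jump term $h\!\int z^4\,\nu_0(dz)$. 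Hence $E[\xi_j^2]\lesssim h_n^2$ in (1) and $\lesssim h_n$ in (2), and the sum over $j$ is of order $nh_n^2$ and $nh_n=T_n$ respectively.

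For the predictable part, the It\^o isometry (uniformly applicable because $\var[Z_1]=1$ in case (2)) gives $E^{j-1}[(\intj C_s\,dw_s)^{2}] = \intj E^{j-1}[C_s^2]\,ds$ (and the analogue with $C_{s-}$, $dZ_s$), so in either case
\[
B_j = R_{t_{j-1}} \intj \!\left(E^{j-1}[C_s^2] - C^2_{t_{j-1}}\right) ds.
\]
Dynkin's formula applied to $C^2$, whose (extended) generator is of polynomial growth by Assumption \ref{Smoothness}, produces $|B_j|\lesssim h_n^2 R(X_{t_{j-1}})$. Cauchy-Schwarz within each block followed by Markov's inequality then yields
\begin{equation*}
\sum_{i=1}^{k_n}\left|\sum_{j\in B_{k_i}} B_j\right|^{\!2} \leq c_n \sum_{j=1}^n B_j^2 = O_p\!\left(\frac{n^2h_n^4}{k_n}\right),
\end{equation*}
which is $o_p(nh_n^2)$ in case (1) since $nh_n^2\to 0$, and $o_p(T_n)$ in case (2) since $nh_n^3 = (nh_n^2)h_n \to 0$. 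Combining the two estimates establishes both \eqref{wbsum4} and \eqref{lbsum4}.

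The principal obstacle is the differing fourth-moment scaling of the stochastic integrals in the two regimes: the Wiener case permits the standard $O(h_n^2)$ BDG bound, whereas in the pure-jump L\'evy case the single-jump contribution forces $E\!\left[|\intj C_{s-}\,dZ_s|^{4}\right] = O(h_n)$, dominating the $O(h_n^2)$ ``continuous-martingale-like'' term. Tracking this single-jump effect carefully when applying BDG to the compensated Poisson integral is precisely what distinguishes the rate $T_n=nh_n$ in \eqref{lbsum4} from the faster $nh_n^2$ rate in \eqref{wbsum4}.
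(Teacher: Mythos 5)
Your proof is correct, and for the diffusion bound \eqref{wbsum4} it coincides with the paper's argument: the paper likewise centers $(\intj C_s\,dw_s)^2$ at $E^{j-1}[(\intj C_s\,dw_s)^2]$, bounds the martingale-difference block sums via Burkholder/orthogonality to get $O_p(nh_n^2)$, and uses It\^o's (Dynkin's) formula for $C^2$ to show the conditional bias contributes $O_p(n^2h_n^4/k_n)=o_p(nh_n^2)$. For the pure-jump bound \eqref{lbsum4} your route is genuinely a bit different: the paper applies It\^o's formula to $(\intj C_{s-}\,dZ_s)^2$ itself, splitting it into the drift-type term $\intj (C_s^2-C_{t_{j-1}}^2)\,ds$ (shown to be $O_p(n^2h_n^{5/2}/k_n)=o_p(T_n)$) plus two explicit martingales — the cross term and the compensated jump-square term $\intj\int_\mbbr C_{s-}^2z^2\tilde N(ds,dz)$ — which Burkholder bounds by $O_p(T_n)$; you instead keep the same conditional-centering decomposition as in the Wiener case and supply the jump-dominated fourth-moment estimate $E^{j-1}[|\intj C_{s-}\,dZ_s|^4]\lesssim h_nR(X_{t_{j-1}})$ (as in \cite[Lemma 5.3]{Mas13-1}) for the martingale part, together with the conditional isometry and Dynkin's formula for $C^2$ under the jump generator for the bias, whose negligibility indeed follows from $nh_n^3\to0$. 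Your version is more unified across the two noise regimes and yields a formally smaller bias remainder ($n^2h_n^4/k_n$ versus $n^2h_n^{5/2}/k_n$, both negligible), at the price of having to check that $\mca C^2$ is well defined and of polynomial growth in the jump case — which does hold under Assumptions \ref{Smoothness} and \ref{Moments}-(2) since $\nu_0$ has finite moments of all orders away from the origin; the paper's It\^o-formula split avoids this check and has the side benefit of exhibiting the compensated-Poisson-integral structure that reappears in Lemma \ref{pLl2c} and in the expansion \eqref{sepjl}--\eqref{sempjl}.
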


\begin{proof}
     \noindent\textbf{Proof of \eqref{wbsum4}}: 
     First we rewrite $\left(\intj C_{s}dw_s\right)^2-h_nC_{t_{j-1}}^2$ as 
     \begin{align}
         \nn& \left(\intj C_{s}dw_s\right)^2-h_nC_{t_{j-1}}^2\\
         \nn& =\left(\intj C_{s}dw_s\right)^2-E^{j-1}\left[\left(\intj C_{s}dw_s\right)^2\right]+E^{j-1}\left[\left(\intj C_{s}dw_s\right)^2\right]-h_nC_{t_{j-1}}^2.
     \end{align}
     From Assumption \ref{Stability} and Burkholder's inequality, there exists a positive constant $K$ such that
     \begin{align}
         \nn & E\left[\sum_{i=1}^{k_n}\left|\sum_{j\in B_{k_i}} R_{t_{j-1}}\left\{\left(\intj C_{s}dw_s\right)^2-E^{j-1}\left[\left(\intj C_{s}dw_s\right)^2\right]\right\}\right|^2\right]\\
         \nn & \lesssim h_n\sum_{i=1}^{k_n} \sum_{j\in B_{k_i}} E\left[\left|R_{t_{j-1}}\right|^2\intj E^{j-1}\left[1+|X_s-X_{t_{j-1}}|^K+|X_{t_{j-1}}|^K\right]ds\right]\\
         \nn & \lesssim nh_n^2.
     \end{align}
    Since It\^{o}'s formula leads to
    \begin{align}
        \nn  E^{j-1}\left[\left(\intj C_{s}dw_s\right)^2\right]&=\intj E^{j-1}\left[C_{s}^2\right]ds\\
        \nn &=h_nC_{t_{j-1}}^2+\intj \left(\int_{t_{j-1}}^s E^{j-1}\left[\mca C_u^2\right]du\right)ds,
    \end{align}
    in a similar manner to the proof of \eqref{bsum1}, we can obtain
    \begin{align}
        \nn & E\left[\sum_{i=1}^{k_n}\left|\sum_{j\in B_{k_i}} R_{t_{j-1}}\left\{E^{j-1}\left[\left(\intj C_{s}dw_s\right)^2\right]-h_nC_{t_{j-1}}^2\right\}\right|^2\right]\lesssim \frac{n^2h_n^4}{k_n}=o\left(nh_n^2\right),
    \end{align}
    and thus \eqref{wbsum4}.
    % From Assumption \ref{Smoothness}, there exists a positive constant $K$ such that 
    %  \begin{align}
    %      \nn & E\left[\sum_{i=1}^{k_n}\left|\sum_{j\in B_{k_i}} R_{t_{j-1}}\intj \left(C_{s}^2-c_{t_{j-1}}^2\right)ds\right|^2\right]\\
    %      \nn & \lesssim \frac{nh_n}{k_n}\sum_{i=1}^{k_n}\sum_{j\in B_{k_i}} E\left[\left|R_{t_{j-1}}^2\right|\intj \sqrt{E^{j-1}\left[\left|X_s-X_{t_{j-1}}\right|^4\right]}\sqrt{E^{j-1}\left[1+\left|X_s-X_{t_{j-1}}\right|^K+\left|X_{t_{j-1}}\right|^K\right]}ds\right]\\
    %      \nn &\lesssim \frac{n^2h_n^3}{k_n},
    %  \end{align}
    % where we used \cite[Lemma 5.3]{Mas13-1}, Jensen's inequality and Cauchy-Schwartz inequality in the above evaluation.
    % Thus, utilizing Burkholder's inequality and
    % \begin{equation}
    %     \nn \frac{nh_n^2k_n}{n^2h_n^3}=\frac{k_n}{nh_n}\lesssim T_n^{\del-1}=o(1),
    % \end{equation}
    % we get \eqref{wbsum4}.
    
    \medskip
    \noindent\textbf{Proof of \eqref{lbsum4}}: From Assumption \ref{Moments}, $\int_\mbbr z^2\nu_0(dz)=1$. Hence, It\^{o}'s formula yields that
     \begin{align}
         \nn &\left(\intj C_{s-}dZ_s\right)^2-h_nC_{t_{j-1}}^2\\
         \nn &=\intj \left(C_{s}^2-C_{t_{j-1}}^2\right)ds+2\intj \left(\int_{t_{j-1}}^s C_{u-}dZ_u\right)C_{s-}dZ_s+\intj\int_\mbbr C_{s-}^2 z^2\tilde{N}(ds,dz).
     \end{align}
     By taking a similar route to the proof of \eqref{wbsum4}, we can easily observe that
     \begin{equation}
         \nn E\left[\sum_{i=1}^{k_n}\left|\sum_{j\in B_{k_i}} R_{t_{j-1}}\intj \left(C_{s}^2-C_{t_{j-1}}^2\right)ds\right|^2\right]\lesssim \frac{n^2h_n^{5/2}}{k_n}.
     \end{equation}
     Since the second and third terms of the right-hand-side are martingale, the desired result directly follows from Burkholder's inequality and 
     \begin{equation}
         \nn \frac{n^2h_n^{5/2}}{k_nT_n}=\frac{nh_n^{3/2}}{k_n}\lesssim \sqrt{nh_n^2}=o(1).
     \end{equation}
\end{proof}

We will say that a matrix-valued function $f$ on $\mbbr$ is centered if $\pi_0(f)=0$ in the rest of this section.
\begin{Lem}\label{misse}
Suppose that a centered matrix-valued function $f$ is differentiable, and that it and its derivative are of at most polynomial growth. Then under Assumption \ref{Smoothness}, Assumption \ref{Stability}, and Assumption \ref{Moments}, we have
\begin{align}
    \sum_{i=1}^{k_n}\left|\sum_{j\in B_{k_i}}f_{t_{j-1}}\right|^2=O_p\left(\sqrt{nk_n}\right).
    % &\frac{1}{T_n}\sum_{i=1}^{k_n}\left| \int_{(i-1)c_nh_n}^{ic_nh_n}\left(f_s-\sum_{j\in B_{k_i}}f_{t_{j-1}}\chi_j(s)\right)ds\right|^2=o_p(1),\label{misse1}\\
    % &\frac{1}{T_n}\sum_{i=1}^{k_n}\left| \int_{(i-1)c_nh_n}^{ic_nh_n}f_sds\right|^2=O_p(1).\label{misse2}
\end{align}
\end{Lem}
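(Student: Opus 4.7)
The plan is to invoke the (extended) Poisson-equation machinery already central to the paper. Because $f$ is centered with polynomial-growth derivative, I would first produce $g$ solving $\mca g=f$ (in the diffusion case, via \cite{ParVer01}) or $\tilde{\mca} g\overset{EPE}=f$ (in the pure-jump case, via \cite[Proposition 3.5]{Ueh19}), with $g$ (and, in the diffusion case, $g'$) of at most polynomial growth. Let $M^{g}$ denote the local-martingale part of $g(X)$; the defining (extended-)martingale property then gives
\begin{equation*}
g(X_{t_{ic_n}})-g(X_{t_{(i-1)c_n}})=\int_{t_{(i-1)c_n}}^{t_{ic_n}}f(X_s)\,ds+\bigl(M^{g}_{t_{ic_n}}-M^{g}_{t_{(i-1)c_n}}\bigr).
\end{equation*}

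Next I would combine this with the elementary Riemann-sum identity
\begin{equation*}
h_n\sum_{j\in B_{k_i}}f_{t_{j-1}}=\int_{t_{(i-1)c_n}}^{t_{ic_n}}f(X_s)\,ds-\sum_{j\in B_{k_i}}\int_{t_{j-1}}^{t_j}\bigl[f(X_s)-f_{t_{j-1}}\bigr]\,ds
\end{equation*}
to arrive at the block-level decomposition
\begin{equation*}
\sum_{j\in B_{k_i}}f_{t_{j-1}}=\frac{1}{h_n}\bigl(g(X_{t_{ic_n}})-g(X_{t_{(i-1)c_n}})-(M^{g}_{t_{ic_n}}-M^{g}_{t_{(i-1)c_n}})\bigr)+E_i,
\end{equation*}
where $E_i$ collects the Riemann-sum discretization error.

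Then I would square, sum over $i$, apply $|a+b+c|^2\le 3(a^2+b^2+c^2)$, and treat each piece separately: the telescoping-boundary sum via stationarity of $g(X_t)$ and the moment bound in Assumption~\ref{Stability}-(2); the martingale sum via Burkholder's inequality together with the estimate $E\langle M^{g}\rangle_{T_n}=O(T_n)$ that follows from polynomial bounds on the integrand; and the discretization residual $\sum_i|E_i|^2$ by It\^o-expanding $f(X_s)-f_{t_{j-1}}$ into a drift plus a stochastic-integral part and appealing to Lemmas~\ref{bsum} and \ref{wlbsum}. Assembling the three pieces under the scaling $k_n=O(T_n^{\delta})$ with $\delta<1$ and $nh_n^2\to 0$ should yield the claimed order.

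The main obstacle will be the pure-jump case, since there $g$ is only weighted-H\"older continuous (not $C^1$), so It\^o's formula is unavailable for a Taylor expansion of $g$ or of $f$ across a large jump. I would instead use the extended-generator martingale property directly to extract $M^{g}$, and exploit the weighted-H\"older regularity of $g$ developed in \cite[Section~3]{Ueh19}, together with the compensated-Poisson-measure decomposition, to estimate the jump contribution to $E_i$. Careful bookkeeping of polynomial weights against Assumption~\ref{Stability}-(2) is what makes this case go through.
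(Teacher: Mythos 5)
Your route is genuinely different from the paper's, and it is worth saying first that the paper never touches the Poisson equation here: it splits $f_{t_{j-1}}$ into $f_{t_{j-1}}-E[f_{t_{j-1}}]$ plus $E[f_{t_{j-1}}]$, bounds the second moment of each centered block sum by invoking the mixing-type moment bound of \cite[Lemma 4]{Yos11}, controls the block sums of the means $E[f_{t_{j-1}}]$ directly through the exponential ergodicity of Assumption \ref{Stability}, and then just sums the per-block bounds over the $k_n$ blocks. No martingale decomposition, no generator, two displays in total.

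The genuine gap in your plan is quantitative: the decomposition you propose cannot produce the advertised rate $O_p(\sqrt{nk_n})$, so the final ``assembling'' step fails. The martingale piece alone enters as $h_n^{-1}\bigl(M^{g}_{t_{ic_n}}-M^{g}_{t_{(i-1)c_n}}\bigr)$, whose per-block second moment is $h_n^{-2}$ times the expected quadratic variation over a block of time length $c_nh_n$, i.e.\ of order $c_n/h_n$; summing over the $k_n$ blocks gives order $n/h_n$. The telescoping boundary term contributes order $k_n/h_n^{2}$. Under $h_n\to0$, $nh_n^2\to0$ and $k_n\le n$ one has $n/h_n\ge n\ge\sqrt{nk_n}$ with $\frac{n/h_n}{\sqrt{nk_n}}=\sqrt{n/k_n}\,h_n^{-1}\to\infty$, and similarly $k_n/h_n^{2}\ge nk_n\gg\sqrt{nk_n}$, so both pieces overshoot the target by diverging factors; moreover the martingale part genuinely has this size for generic $f$, so no sharpening inside your decomposition can close the gap. (Indeed, even an i.i.d.\ benchmark yields per-block variance of order $c_n$ and hence a total of order $n\gg\sqrt{nk_n}$, so any proof of the displayed rate must do something other than bounding each block's second moment in your fashion; this tension is also a point on which the paper's own short argument deserves scrutiny.) A secondary issue: in the diffusion case the paper assumes solvability only of the two specific Poisson equations \eqref{PE1}--\eqref{PE2}; existence, differentiability and polynomial growth of a solution $g$ of $\mca g=f$ for an \emph{arbitrary} centered $f$ via \cite{ParVer01} requires nondegeneracy-type hypotheses that are not imposed here, so even the first step of your plan is not available under the stated assumptions, although the extended-generator route of \cite{Ueh19} could partially substitute for it in the pure-jump case.
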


\begin{proof}
Applying \cite[Lemma 4]{Yos11}, we have
\begin{equation}
    \nn E\left[\left|\sum_{j\in B_{k_i}}\left(f_{t_{j-1}}-E[f_{t_{j-1}}]\right)\right|^2\right]\lesssim \sqrt{\frac{n}{k_n}}.
\end{equation}
From Assumption \ref{Stability}, we also have
\begin{align}
    \nn \left|\sum_{j\in B_{k_i}}E[f_{t_{j-1}}]\right|^2 &=\left|\sum_{j\in B_{k_i}}\int_\mbbr\int_\mbbr f(y) \left[P_{t_{j-1}}(x,dy)-\pi_0(dy)\right]\eta(dx)\right|^2\\
    \nn & \lesssim e^{-2a[(i-1)c_n+1]h_n}\left|\sum_{j=0}^{c_n-1} e^{-ajh_n}\right|^2\\
    \nn&\lesssim 1.
\end{align}
Thus we get the desired result.

% Applying Jensen's inequality and Cauchy-Schwartz inequality, we have
% \begin{align}
%     \nn E\left[\left| \int_{(i-1)c_nh_n}^{ic_nh_n}\left(f_s-\sum_{j\in B_{k_i}}f_{t_{j-1}}\chi_j(s)\right)ds\right|^2\right]&\lesssim \frac{T_n}{k_n}\sum_{j\in B_{k_i}}\intj\sqrt{E\left[|X_s-X_{t_{j-1}}|^2\right]}\sqrt{\sup_{t\geq 0}E\left[1+|X_t|^K\right]}ds\\
%     \nn & \lesssim \frac{T_n^2\sqrt{h_n}}{k_n^2}.
% \end{align}
% Thus we obtain 
% \begin{equation}
%     \nn\frac{1}{T_n}\sum_{i=1}^{k_n}E\left[\left| \int_{(i-1)c_nh_n}^{ic_nh_n}\left(f_s-\sum_{j\in B_{k_i}}f_{t_{j-1}}\chi_j(s)\right)ds\right|^2\right]\lesssim \frac{T_n\sqrt{h_n}}{k_n}\lesssim \sqrt{nh_n^2}=o(1),
% \end{equation}
% and \eqref{misse2}.
% Again by applying Jensen's inequality, it follows from Assumption \ref{Stability} that
% \begin{align}
%     \nn E\left[\left|\int_{(i-1)c_nh_n}^{ic_nh_n}f_sds\right|^2\right]&\leq c_nh_n\int_{(i-1)c_nh_n}^{ic_nh_n}E[|f_s|^2]ds\\
%     \nn &= c_nh_n\int_{(i-1)c_nh_n}^{ic_nh_n}\int_\mbbr\int_\mbbr f^2(y)\left[P_s(x,dy)-\pi_0(dy)\right]\eta(dx)ds\\
%     \nn &\lesssim c_nh_n\exp[-(i-1)c_nh_n][1-\exp(-c_nh_n)]\\
%     \nn &\lesssim c_nh_n,
% \end{align}
% and that by the definition of $c_n$,
% \begin{align}
%     \nn & \frac{1}{T_n}\sum_{i=1}^{k_n}E\left[\left| \int_{(i-1)c_nh_n}^{ic_nh_n}f_sds\right|^2\right]\lesssim 1.
% \end{align}
% Hence \eqref{misse2} follows.
\end{proof}

The following lemma verifies the probability limit of the sum of squared Wiener integrals.

\begin{Lem}\label{Wl2c}
Suppose that Assumption \ref{Smoothness}, Assumption \ref{Stability}, and Assumption \ref{Moments}-(1) hold.
We further assume that a differentiable function $f$ on $\mbbr$ and its derivative are of at most polynomial growth.
Then we have
\begin{align}
    % &\int_{(i-1)c_nh_n}^{ic_nh_n}\left[f_s-\left(\sum_{j=(i-1)c_n+1}^{ic_n}f_{t_{j-1}}\chi_j(s)\right)\right]w_sdw_s\\
    &\frac{1}{nh_n^2}\sum_{i=1}^{k_n}\left\{\int_{(i-1)c_nh_n}^{ic_nh_n}\left[\sum_{j=(i-1)c_n+1}^{ic_n}f_{t_{j-1}}(w_s-w_{t_{j-1}})\chi_j(s)\right]dw_s\right\}^2\cip\frac{1}{2}\int_\mbbr \left(f(x)\right)^2\pi_0(dx), \label{wc1}\\
    &\frac{1}{T_n}\sum_{i=1}^{k_n}\left(\int_{(i-1)c_nh_n}^{ic_nh_n}\sum_{j=(i-1)c_n+1}^{ic_n}f_{s}\chi_j(s)dw_s\right)^2\cip \int_\mbbr \left(f(x)\right)^2\pi_0(dx).\label{wc2}
\end{align}
\end{Lem}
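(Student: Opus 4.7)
In both statements, the strategy is to collapse each block stochastic integral to a form involving explicit increments, then split the resulting sum into a ``predictable'' component to which ergodicity applies and a centered remainder that can be controlled via the block-martingale structure.

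For \eqref{wc2}, the partition $\bigcup_{j\in B_{k_i}}[t_{j-1},t_j)=[(i-1)c_nh_n,ic_nh_n)$ collapses the inner integral to $\int_{(i-1)c_nh_n}^{ic_nh_n} f_s\,dw_s$. It\^{o}'s formula yields
\begin{equation*}
\Bigl(\int_{(i-1)c_nh_n}^{ic_nh_n} f_s\,dw_s\Bigr)^2 = \int_{(i-1)c_nh_n}^{ic_nh_n} f_s^2\,ds + M_i,
\end{equation*}
with $M_i$ a martingale increment satisfying $E[M_i^2]\lesssim c_n^2h_n^2=T_n^2/k_n^2$ by It\^{o} isometry, the polynomial growth of $f$, and Assumption \ref{Stability}. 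Summing in $i$ turns the first term into $\int_0^{T_n} f_s^2\,ds$, and $T_n^{-1}\int_0^{T_n} f_s^2\,ds\cip\int f^2\,d\pi_0$ by ergodicity. Martingale orthogonality gives $E[(\sum_i M_i)^2]\lesssim T_n^2/k_n$, so $T_n^{-1}\sum_i M_i=O_p(k_n^{-1/2})=o_p(1)$.

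For \eqref{wc1}, It\^{o}'s formula provides $\int_{t_{j-1}}^{t_j}(w_s-w_{t_{j-1}})\,dw_s=\tfrac12((\D_j w)^2-h_n)$, so with $\xi_j:=f_{t_{j-1}}\bigl((\D_j w)^2/h_n-1\bigr)$ the left-hand side of \eqref{wc1} equals $(4n)^{-1}\sum_{i=1}^{k_n}\bigl(\sum_{j\in B_{k_i}}\xi_j\bigr)^2$. The sequence $\{\xi_j\}$ is a martingale difference with $E^{j-1}[\xi_j^2]=2f_{t_{j-1}}^2$ (using $E[N(0,1)^4]=3$). Expanding
\begin{equation*}
\Bigl(\sum_{j\in B_{k_i}}\xi_j\Bigr)^2 = \sum_{j\in B_{k_i}}\xi_j^2 + U_i, \qquad U_i:=2\sum_{\substack{j<k\\ j,k\in B_{k_i}}}\xi_j\xi_k,
\end{equation*}
one notes $\sum_i\sum_{j\in B_{k_i}}\xi_j^2=\sum_{j=1}^n\xi_j^2$, and replacing $\xi_j^2$ by its compensator $2f_{t_{j-1}}^2$ at cost $O_p(\sqrt{n})$ (via the martingale $\sum_j(\xi_j^2-2f_{t_{j-1}}^2)$ with uniformly bounded conditional variance) combined with ergodicity yields $(4n)^{-1}\sum_j\xi_j^2\cip\tfrac12\int f^2\,d\pi_0$. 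For the off-diagonal piece, $\{U_i\}_i$ is an $\mcf_{ic_nh_n}$-martingale difference, and a fourth-moment bound $E^{(i-1)c_n}\bigl[(\sum_{j\in B_{k_i}}\xi_j)^4\bigr]=O(c_n^2)$ based on the independence of the Gaussian increments gives $E[(\sum_i U_i)^2]=O(n^2/k_n)$, so $(4n)^{-1}\sum_i U_i=O_p(k_n^{-1/2})=o_p(1)$.

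The main technical obstacle is the quartic-moment bound on the block sum $\sum_{j\in B_{k_i}}\xi_j$ needed for $U_i$ in \eqref{wc1}; the required $O(c_n^2)$ estimate rests essentially on the martingale-difference property of $\xi_j$ together with the Gaussian fourth-moment computation. Everything else reduces to standard ergodicity together with Burkholder-type arguments already used in Lemmas \ref{bsum} and \ref{wlbsum}.
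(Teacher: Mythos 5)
Your argument is correct, and at bottom it is the same idea as the paper's: write the square of each block martingale as its compensator plus an orthogonal remainder, kill the remainder by martingale orthogonality across blocks, and let ergodicity produce the limit. The implementations differ, though. For \eqref{wc1} the paper applies It\^{o}'s formula at the block level to $Y_{i,\cdot}^2$, so the compensator appears as $\sum_j f_{t_{j-1}}^2\intj(w_s-w_{t_{j-1}})^2ds$ and is handled via \cite[Lemma 9]{GenJac93} (conditional mean $h_n^2/2$) plus the ergodic theorem, while the cross term $2\int Y_{i,s}dY_{i,s}$ is dismissed by Burkholder; you instead integrate $\intj(w_s-w_{t_{j-1}})dw_s=\tfrac12((\D_jw)^2-h_n)$ explicitly and reduce everything to a discrete quadratic form in the martingale differences $\xi_j$, treating the diagonal by a compensation-plus-LLN step and the off-diagonal $U_i$ (the discrete analogue of the paper's $2\int Y\,dY$ term) by a Rosenthal-type fourth-moment bound $E[(\sum_{j\in B_{k_i}}\xi_j)^4]=O(c_n^2)$. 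For \eqref{wc2} you are slightly more economical: you apply It\^{o} directly to $\int f_s\,dw_s$ on each block, whereas the paper first replaces $f_s$ by $f_{t_{j-1}}$ and then recycles the \eqref{wc1} argument; your route avoids that discretization estimate at the cost of needing fourth moments of $f(X_s)$, which Assumption \ref{Stability} supplies anyway. One point worth making explicit: your remainder bounds are $O_p(k_n^{-1/2})$, so the conclusion needs $k_n\to\infty$; this is implicit in the paper's block setup as well (and indeed the statement fails for bounded $k_n$), so it is not a gap specific to your proof, but you should state it as a standing hypothesis.
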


\begin{proof}
For simplicity, we write 
\begin{equation}
    Y_{i,t}=\int_{(i-1)c_nh_n}^{t}\left[\sum_{j=(i-1)c_n+1}^{ic_n}f_{t_{j-1}}(w_s-w_{t_{j-1}})\chi_j(s)\right]dw_s,  \quad t\in((i-1)c_nh_n, ic_nh_n].\nn
\end{equation}
From It\^{o}'s formula, it follows that
\begin{align}\nn
    \left(Y_{i,ic_nh_n}\right)^2=2\int_{(i-1)c_nh_n}^{ic_nh_n} Y_{i,s}dY_{i,s}+\sum_{j=(i-1)c_n+1}^{ic_n}\left[f^2_{t_{j-1}} \int_{(j-1)h_n}^{jh_n} (w_s-w_{t_{j-1}})^2ds\right].
\end{align}
Hence the left-hand-side of \eqref{wc1} can be rewritten as:
\begin{align}
    &\frac{1}{nh_n^2}\sum_{i=1}^{k_n}\left\{\int_{(i-1)c_nh_n}^{ic_nh_n}\left[\sum_{j=(i-1)c_n+1}^{ic_n}f_{t_{j-1}}(w_s-w_{t_{j-1}})\chi_j(s)\right]dw_s\right\}^2\nn\\
    &=\frac{2}{nh_n^2}\sum_{i=1}^{k_n}\int_{(i-1)c_nh_n}^{ic_nh_n}Y_{i,s}dY_{i,s}+\frac{1}{nh_n^2}\sumj \left[f^2_{j-1}\intj (w_s-w_{t_{j-1}})^2ds\right]\nn,
    % &=\frac{2}{nh_n^2}\int_0^{T_n}Y_sdY_s-\frac{2}{3nh_n^2}\int_0^{T_n}\left(\sumj f^2_{j-1}\chi_j(s)\right) w_s^3dw_s +\frac{1}{6nh_n^2}\sumj f^2_{j-1}(w^4_j-w^4_{j-1})\nn.
\end{align}
% where 
% \begin{equation}
%     \nn Y_s=\int_0^s\left[\sum_{j=1}^{[\frac{s}{h_n}]} f_{t_{j-1}}(w_u-w_{t_{j-1}}) \chi_j(u)\right]dw_u
% \end{equation}
% In the last equation, we again used I\^{o}'s formula for $g(x)=x^4$.
By utilizing Burkholder's inequality, we get
\begin{equation}
    \nn E\left[\left(\frac{1}{nh_n^2}\sum_{i=1}^{k_n}\int_{(i-1)c_nh_n}^{ic_nh_n}Y_{i,s}dY_{i,s}\right)^2\right]=O\left(\frac{1}{T_n}\right).
\end{equation}
Moreover, Fubini's theorem and Jensen's inequality lead to
\begin{align}
    \nn &E^{j-1}\left[\intj (w_s-w_{t_{j-1}})^2ds\right]=\frac{h_n^2}{2},\\
    \nn &E^{j-1}\left[\left\{\intj (w_s-w_{t_{j-1}})^2ds\right\}^2\right]\lesssim h_n^4.
\end{align}
Hence \cite[Lemma 9]{GenJac93} and the ergodic theorem imply \eqref{wc1}.
Next we show \eqref{wc2}.
First we decompose $\left(\int_{(i-1)c_nh_n}^{ic_nh_n}\sum_{j=(i-1)c_n+1}^{ic_n}f_{s}\chi_j(s)dw_s\right)^2$ as:
\begin{align*}
    &\left(\int_{(i-1)c_nh_n}^{ic_nh_n}\sum_{j=(i-1)c_n+1}^{ic_n}f_{s}\chi_j(s)dw_s\right)^2\\
    &=\left(\int_{(i-1)c_nh_n}^{ic_nh_n}\sum_{j=(i-1)c_n+1}^{ic_n}(f_{s}-f_{t_{j-1}})\chi_j(s)dw_s+\sum_{j=(i-1)c_n+1}^{ic_n}f_{t_{j-1}}\D_j w\right)^2.
\end{align*}
From the isometry property, Cauchy-Schwarz inequality, and Burkholder's inequality, we have 
\begin{align*}
    &E\left[\left(\int_{(i-1)c_nh_n}^{ic_nh_n}\sum_{j=(i-1)c_n+1}^{ic_n}(f_{s}-f_{t_{j-1}})\chi_j(s)dw_s\right)^2\right]\\
    &=\sum_{j=(i-1)c_n+1}^{ic_n}\intj E\left[(f_{s}-f_{t_{j-1}})^2\right]ds\\
    &\lesssim \sum_{j=(i-1)c_n+1}^{ic_n}\intj \sqrt{E\left[|X_s-X_{t_{j-1}}|^4\right]}ds\\
    &\lesssim c_nh_n^2.
\end{align*}
By the independence of the increments and the ergodic theorem, we also obtain
\begin{equation*}
    E\left[\left(\sum_{j=(i-1)c_n+1}^{ic_n}f_{t_{j-1}}\D_j w\right)^2\right]=h_n\sum_{j=(i-1)c_n+1}^{ic_n}E[f^2_{t_{j-1}}]\lesssim c_nh_n.
\end{equation*}
Hence Cauchy-Schwarz inequality yields that
\begin{align*}
    &\frac{1}{T_n}\sum_{i=1}^{k_n}\left(\int_{(i-1)c_nh_n}^{ic_nh_n}\sum_{j=(i-1)c_n+1}^{ic_n}f_{s}\chi_j(s)dw_s\right)^2\\
    &=\frac{1}{T_n}\sum_{i=1}^{k_n}\left(\sum_{j=(i-1)c_n+1}^{ic_n}f_{t_{j-1}}\D_j w\right)^2+O_p\left(\sqrt{h_n}\right)\\
    &=\frac{1}{T_n}\sum_{i=1}^{k_n}\left(\int_{(i-1)c_nh_n}^{ic_nh_n}\sum_{j=(i-1)c_n+1}^{ic_n}f_{t_{j-1}}\chi_j(s)dw_s \right)^2+O_p\left(\sqrt{h_n}\right).
\end{align*}
Then, by mimicking the proof of \eqref{wc1}, we can easily get \eqref{wc2}.
\end{proof}

\medskip

We next show a similar convergence result to Lemma \ref{Wl2c} when the driving noise is a pure-jump L\'{e}vy process.

\begin{Lem}\label{pLl2c}
Suppose that Assumptions \ref{Smoothness}-\ref{Stability}, and Assumption \ref{Moments}-(2) hold. Moreover, for two functions $f_1$ and $f_2$ on $\mbbr$, we assume the following conditions:
\begin{enumerate}
\item $f_1$ is differentiable, and it and its derivative is of at most polynomial growth.
\item There exists a positive constant $K$ such that for any $p\in(1,\infty)$ and $q=\frac{p}{p-1}$,
\begin{equation*}
\sup_{x,y\in\mbbr, x\neq y} \frac{\left|f_2(x)-f_2(y)\right|}{|x-y|^{1/p}(1+|x|^{qK}+|y|^{qK})}<\infty.
\end{equation*}
\end{enumerate}
Let $f(x,z)=f_1(x)z^\del+f_2(x+z)-f_2(x)$ for a fixed $\del\geq1$.
Then, we have
\begin{equation*}
\frac{1}{T_n}\sum_{i=1}^{k_n}\left(\int_{(i-1)c_nh_n}^{ic_nh_n}\int f(X_{s-},z)\tilde{N}(ds,dz)\right)^2 \cip \int_\mbbr\int_\mbbr \left(f(x,z)\right)^2\pi_0(dx)\nu_0(dz).
\end{equation*}
\end{Lem}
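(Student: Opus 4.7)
The plan is to mirror the strategy of Lemma \ref{Wl2c}, replacing the It\^{o} formula for continuous martingales by its jump-martingale counterpart and relying on the ergodic theorem for the quadratic-variation compensator. For each $i\in\{1,\dots,k_n\}$ introduce
\begin{equation*}
M_{i,t}=\int_{(i-1)c_nh_n}^{t}\int_\mbbr f(X_{s-},z)\tilde N(ds,dz),\quad t\in[(i-1)c_nh_n,ic_nh_n],
\end{equation*}
so that $M_{i,ic_nh_n}^2$ is the summand inside the block sum. Applying It\^{o}'s formula to the purely discontinuous martingale $M_{i,\cdot}$ gives
\begin{equation*}
M_{i,ic_nh_n}^2=2\int_{(i-1)c_nh_n}^{ic_nh_n}M_{i,s-}\int_\mbbr f(X_{s-},z)\tilde N(ds,dz)+\int_{(i-1)c_nh_n}^{ic_nh_n}\int_\mbbr f(X_{s-},z)^2N(ds,dz).
\end{equation*}
Summation over $i$ telescopes the second terms to $\int_0^{T_n}\int_\mbbr f(X_{s-},z)^2 N(ds,dz)$, which after compensation splits as $\int_0^{T_n}\int_\mbbr f(X_{s-},z)^2\tilde N(ds,dz)+\int_0^{T_n}g(X_s)ds$ with $g(x):=\int_\mbbr f(x,z)^2\nu_0(dz)$.

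The three resulting pieces are then handled separately after division by $T_n$. The It\^{o} cross-terms $X_i:=2\int_{(i-1)c_nh_n}^{ic_nh_n}M_{i,s-}\int_\mbbr f(X_{s-},z)\tilde N(ds,dz)$ are $\mcf_{(i-1)c_nh_n}$-centered and hence mutually uncorrelated; the isometry bound $E[M_{i,s-}^2]\lesssim c_nh_n\sup_u E[g(X_u)]=O(c_nh_n)$, together with a Cauchy--Schwarz step against $\sup_s E[g(X_s)^2]<\infty$, yields $E[X_i^2]=O(c_n^2h_n^2)$, so $E\bigl[(\sum_iX_i)^2\bigr]=\sum_iE[X_i^2]=O(T_n^2/k_n)$, which is $o(T_n^2)$ because $k_n\to\infty$ under Assumption \ref{weights}. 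A single isometry step bounds $E\bigl[(\int_0^{T_n}\int_\mbbr f^2\tilde N)^2\bigr]=E\int_0^{T_n}g_4(X_s)ds=O(T_n)$ with $g_4(x):=\int_\mbbr f(x,z)^4\nu_0(dz)$, again $o_p(T_n^2)$. Finally, the exponential ergodicity from Assumption \ref{Stability}-(1) applied to the polynomially-growing $g$ yields $T_n^{-1}\int_0^{T_n}g(X_s)ds\cip\pi_0(g)=\int_\mbbr\int_\mbbr f(x,z)^2\pi_0(dx)\nu_0(dz)$, the claimed limit.

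The principal technical obstacle is verifying that $g$ and $g_4$ are finite, have polynomial growth in $x$, and admit all required moments under $\pi_0$. The $f_1(x)z^\delta$ contribution is harmless: $\delta\geq1$ combined with BG-index $\beta<2$ gives $\int_{|z|\leq1}|z|^{2\delta}\nu_0(dz)<\infty$, while $E[|Z_1|^q]<\infty$ for all $q$ handles the tails. The delicate piece is $f_2(x+z)-f_2(x)$: invoke the weighted H\"{o}lder hypothesis with $p>1$ chosen so close to $1$ that $2/p>\beta$---possible precisely because $\beta<2$---so that $\int_{|z|\leq1}|z|^{2/p}\nu_0(dz)<\infty$, and the polynomial weight $(1+|x|^{qK}+|x+z|^{qK})^2$ expands into finitely many terms of the form $|x|^m|z|^{m'}$ with polynomial-in-$x$ coefficients, all $\nu_0$-integrable by the same small-jump-plus-tail split. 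The bound on $g_4$ follows identically after choosing $p$ still closer to $1$. The required $\pi_0$-integrability of the resulting polynomial growth functions is then immediate from the uniform moment bound of Assumption \ref{Stability}-(2).
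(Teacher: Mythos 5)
Your proposal is correct and follows essentially the same route as the paper: a block-wise It\^{o} expansion of the squared jump martingale, isometry/second-moment bounds to make the martingale pieces negligible after dividing by $T_n$, the ergodic theorem for the compensator term, and integrability of $\int f(x,z)^2\nu_0(dz)$ via the BG-index together with the weighted H\"{o}lder condition on $f_2$. The only real difference is bookkeeping: your cross-term bound $O(T_n^2/k_n)$ (relying on $k_n\to\infty$, and on a fourth-moment/BDG bound for $M_{i,s}$ which your ``Cauchy--Schwarz step'' should state explicitly) is slightly cruder than, but just as sufficient as, the paper's isometry estimate.
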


\begin{proof}
First we remark that since for all $0\leq s\leq t$,
\begin{equation*}
\int_s^t\int_\mbbr E\left[\left(f(X_{u-},z)\right)^2\right]\nu_0(dz) du<\infty,
\end{equation*}
the stochastic integral $\int_t^s\int f(X_{s-},z)\tilde{N}(ds,dz)$ is well defined (cf. \cite{App09}).
We take a similar way to the proof of the previous lemma.
Let 
\begin{equation*}
Y_{i,t}=\int_{(i-1)c_nh_n}^{t}\int f(X_{s-},z)\tilde{N}(ds,dz), \quad t\in((i-1)c_nh_n, ic_nh_n].
\end{equation*}
By applying It\^{o}'s formula, we have
\begin{align*}
\left(Y_{i,ic_nh_n}\right)^2&=2\int_{(i-1)c_nh_n}^{ic_nh_n} Y_{i,s-}dY_{i,s}+\int_{(i-1)c_nh_n}^{ic_nh_n}\int_\mbbr \left(f(X_{s-},z)\right)^2\tilde{N}(ds,dz)\\
&+\int_{(i-1)c_nh_n}^{ic_nh_n}\int_\mbbr \left(f(X_{s},z)\right)^2\nu_0(dz)ds.
\end{align*}
Hence it follows that 
\begin{align*}
&\frac{1}{T_n}\sum_{i=1}^{k_n}\left(\int_{(i-1)c_nh_n}^{ic_nh_n}\int f(X_{s-},z)\tilde{N}(ds,dz)\right)^2\\
&=\frac{1}{T_n}\left(2\sum_{i=1}^{k_n}\int_{(i-1)c_nh_n}^{ic_nh_n}Y_{i,s}dY_{i,s}+\int_{0}^{T_n}\int_\mbbr \left(f(X_{s-},z)\right)^2\tilde{N}(ds,dz)+\int_{0}^{T_n}\int_\mbbr \left(f(X_{s},z)\right)^2\nu_0(dz)ds\right).
\end{align*}
From the isometry property, we can easily observe that
\begin{equation*}
E\left[\left\{\frac{1}{T_n}\left(\sum_{i=1}^{k_n}\int_{(i-1)c_nh_n}^{ic_nh_n}Y_{i,s}dY_{i,s}+\int_{0}^{T_n}\int_\mbbr \left(f(X_{s-},z)\right)^2\tilde{N}(ds,dz)\right)\right\}^2\right]=O\left(\frac{1}{T_n}\right),
\end{equation*}
so that 
\begin{equation*}
\frac{1}{T_n}\sum_{i=1}^{k_n}\left(\int_{(i-1)c_nh_n}^{ic_nh_n}\int f(X_{s-},z)\tilde{N}(ds,dz)\right)^2=\frac{1}{T_n}\int_{0}^{T_n}\int_\mbbr \left(f(X_{s},z)\right)^2\nu_0(dz)ds+o_p(1).
\end{equation*}
Since we can pick a positive constant $\ep$ such that
\begin{align*}
\int_\mbbr \left(f(x,z)\right)^2\nu_0(dz)\lesssim \left(1+|x|^C\right)\int_\mbbr \left(|z|^{\beta+\ep}\vee |z|^{2\del}\right)\nu_0(dz)\lesssim 1+|x|^C,
\end{align*}
for all $x\in\mbbr$, the desired result follows from the ergodic theorem.

\end{proof}

\subsection{Proof of Proposition \ref{balance}}
For simplicity, we write
\begin{equation}
    \xi_j=\D_j X-C_{t_{j-1}}\D_j Z=\intj A_sds+\intj (C_{s-}-C_{t_{j-1}})dZ_s,\nn
\end{equation}
and we divide the proof into the diffusion case and pure-jump L\'{e}vy driven case.

\subsubsection{Diffusion case}
For any $q\geq 2$, from Burkholder's inequality and Assumption \ref{Smoothness}, we have
\begin{align}
     \label{bhd}&E[\left|\xi_j\right|^q]\lesssim h_n^{q}, \\
     \label{cbhd}&E^{j-1}[\left|\xi_j\right|^q]\lesssim h_n^{q}R_{t_{j-1}}.
\end{align}
Combined with H\"{o}lder's inequality and the ergodic theorem, it follows from \cite[Lemma 9]{GenJac93} that
\begin{align}
    \label{mom2}\frac{1}{T_n}\sumj (\D_j X)^2&=\frac{1}{T_n}\sumj C^2_{t_{j-1}}(\D_j Z)^2+O_p\left(\sqrt{h_n}\right)\cip \int C^2(x)\pi_0(dx),\\
    \label{mom4d}\frac{1}{nh_n^2}\sumj (\D_j X)^4&=\frac{1}{n}\sumj \frac{(\D_j Z)^4}{h_n^2}C^4_{t_{j-1}}+O_p\left(\sqrt{h_n}\right) \cip 3\int C^4(x)\pi_0(dx). 
\end{align}
Hence Slutsky's theorem leads to
\begin{align}
\nn\frac{b_{1,n}}{3h_n}\cip \frac{\int C^4(x)\pi_0(dx)}{\int C^2(x)\pi_0(dx)}.
\end{align}
Next we look at $b_{2,n}$.
By applying the Taylor's expansion, we have
\begin{align}
    &\frac{1}{n}\sumj \left[-2\frac{(\D_j X)^2}{h_n}c^2_{t_{j-1}}(\ges)+c_{t_{j-1}}^4(\ges)\right]\nn \\
    &=\frac{1}{n}\sumj \left[-2\frac{(\D_j X)^2}{h_n}c^2_{t_{j-1}}+c_{t_{j-1}}^4\right] \nn \\
    &+ \frac{1}{n}\int_0^1\sumj \left[-2\frac{(\D_j X)^2}{h_n}\p_\gam c^2_{t_{j-1}}(\ges+u(\gam^\star-\ges))+\p_\gam c_{t_{j-1}}^4(\ges+u(\gam^\star-\ges))\right]du[\ges-\gam^\star] \nn
\end{align}
Let $a_n$ be the convergence rate of $\ges$. 
Since $\p_\gam c^2$ and $\p_\gam c^4$ are of at most polynomial growth and $a_n(\ges-\gam^\star)=O_p(1)$, we can deduce that the second term of the right-hand-side is $O_p(\frac{1}{a_n})$.
% \begin{equation}
%     \frac{1}{n}\int_0^1\sumj \left[-2\frac{(\D_j X)^2}{h_n}\p_\gam c^2_{t_{j-1}}(\ges+u(\gam^\star-\ges))+\p_\gam c_{t_{j-1}}^4(\ges+u(\gam^\star-\ges))\right]du[\ges-\gam^\star]=o_p(1).\nn
% \end{equation}
Hence \eqref{bhd} and \eqref{cbhd} lead to
\begin{align}
    \nn &\frac{1}{n}\sumj \left[\frac{(\D_j X)^4}{3h_n^2}-2\frac{(\D_j X)^2}{h_n}c^2_{t_{j-1}}(\ges)+c_{t_{j-1}}^4(\ges)\right]\\
    &=\frac{1}{n}\sumj \left[\frac{(\D_j Z)^4}{3h_n^2}C^4_{t_{j-1}}-2\frac{(\D_j Z)^2}{h_n}c^2_{t_{j-1}}+c_{t_{j-1}}^4\right]+O_p\left(\sqrt{h_n}\vee\frac{1}{a_n}\right)\nn.
\end{align}
In the misspecified case, we can similarly observe that the first term of the right-hand-side converges to 
\begin{equation}
    \nn \int (C^2(x)-c^2(x))^2\pi_0(dx),
\end{equation}
in probability.
Then \eqref{bc2} follows from the continuous mapping theorem.
In the correctly specified case, we obtain
\begin{align}
    &E^{j-1}\left[\frac{(\D_j Z)^4}{3h_n^2}C^4_{t_{j-1}}-2\frac{(\D_j Z)^2}{h_n}c^2_{t_{j-1}}+c_{t_{j-1}}^4\right]=0\nn,\\
    &E^{j-1}\left[\left(\frac{(\D_j Z)^4}{3h_n^2}C^4_{t_{j-1}}-2\frac{(\D_j Z)^2}{h_n}c^2_{t_{j-1}}+c_{t_{j-1}}^4\right)^2\right]\lesssim R_{t_{j-1}}.\nn
\end{align}
\cite[Lemma 9]{GenJac93} yields that for a positive constant $\del\in(0,1/2)$, we have
\begin{equation}
    \frac{1}{n}\sumj \left[\frac{(\D_j Z)^4}{3h_n^2}C^4_{t_{j-1}}-2\frac{(\D_j Z)^2}{h_n}c^2_{t_{j-1}}+c_{t_{j-1}}^4\right]=o_p\left(n^{-\frac{1}{2}+\del}\right)\nn.
\end{equation}
Since $a_n=\sqrt{n}$ and $\sqrt{h_n}\vee\frac{1}{\sqrt{n}}=\sqrt{h_n}$, we arrive at
\begin{equation}
    \frac{1}{n}\sumj \left[\frac{(\D_j X)^4}{3h_n^2}-2\frac{(\D_j X)^2}{h_n}c^2_{t_{j-1}}(\ges)+c_{t_{j-1}}^4(\ges)\right]=o_p\left(n^{-\frac{1}{2}+\del}\wedge h_n^{\frac{1}{2}-\del}\right)=o_p\left(h_n^{\frac{1}{2}-\del}\right),
\end{equation}
and consequently,
\begin{align}
    \frac{b_{2,n}}{h_n}\lesssim \frac{1}{h_n}\exp\left(-\frac{h_n^{-\frac{1}{2}+\del}}{|o_p(1)|}\right)=o_p(1) \nn,
\end{align}
and this concludes \eqref{bc1}.

\subsubsection{Pure-jump L\'{e}vy driven case}
Since the route is quite similar to the diffusion case, we omit some details.
For any $q\geq2$, Burkholder's inequality and Assumption \ref{Smoothness} imply that
\begin{align}
     \label{bhl} &E[\left|\xi_j\right|^q]\lesssim h_n^2, \\
     \label{cbhl}&E^{j-1}[\left|\xi_j\right|^q]\lesssim h_n^2R_{t_{j-1}}.
\end{align}
% Then, in the diffusion case, it follows from Burkholder's inequality and Jensen's inequality that for all $q\geq 2$,
% \begin{align}
%   E^{j-1}\left[|\zeta_j|^q\right]
%   \lesssim h_n^{q-1}\intj E^{j-1}\left[|a_s|^q\right]ds+h_n^{\frac{q}{2}-1}\intj E^{j-1}\left[\left|X_{s}-X_{t_{j-1}}\right|^q\right]ds \lesssim h_n^q R_{t_{j-1}}. \nn
% \end{align}
% Similarly, in the pure-jump L\'{e}vy driven case, we have
% \begin{equation}
%     E^{j-1}\left[|\zeta_j|^q\right]\lesssim h_n^2 R_{t_{j-1}},\nn
% \end{equation}
% for all $q\geq 2$.
Hence by making use of \cite[Lemma 9]{GenJac93}, we get
\begin{align}
    \label{mom2}&\frac{1}{T_n}\sumj (\D_j X)^2=\frac{1}{T_n}\sumj C^2_{t_{j-1}}(\D_j Z)^2+O_p\left(\sqrt{h_n}\right)\cip \int C^2(x)\pi_0(dx),\\
    \label{mom4l}&\frac{1}{nh_n}\sumj (\D_j X)^4=\frac{1}{n}\sumj C^4_{t_{j-1}}\frac{(\D_j Z)^4}{h_n}+O_p\left(\sqrt{h_n}\right)\cip \int C^4(x)\pi_0(dx)\int z^4\nu_0(dz),
\end{align}
and it is immediate from Slutsky's theorem that 
\begin{align}
\nn b_{1,n}\cip \frac{\int C^4(x)\pi_0(dx)\int z^4\nu_0(dz)}{\int C^2(x)\pi_0(dx)}.
\end{align}
From \eqref{mom4l} and a similar estimates to the diffusion case, we have
\begin{align}
&\nn\frac{1}{n}\sumj \left[\frac{(\D_j X)^4}{3h_n^2}-2\frac{(\D_j X)^2}{h_n}c^2_{t_{j-1}}(\ges)+c_{t_{j-1}}^4(\ges)\right]\to\infty.
\end{align}
Since the function $h(x)=\exp[-(|x|+1/|x|)]$ tends to 0 as $x\to\infty$, we obtain \eqref{bc3}.

\subsection{Proof of Theorem \ref{se}}
The essence of this proof stems from \cite{ChaBos05}.
For simplicity, we hereafter write
\begin{align}
    \nn\zeta_{j}(\gam)=\frac{\p_\gam c_{t_{j-1}}(\gam)}{c^3_{t_{j-1}}(\gam)}\left[h_nc^2_{t_{j-1}}(\gam)-(\D_j X)^2\right],\\
    \nn\eta_j(\al,\gam)=\frac{\p_\al a_{t_{j-1}}(\al)}{c^2_{t_{j-1}}(\gam)}\left[\D_j X-h_na_{t_{j-1}}(\al)\right].
\end{align}
Since the matrix $\bar{\Gam}_n$ is block diagonal, we divide the proof of \eqref{se2} into the scale part: 
\begin{align}
    \label{seg1}\sqrt{\frac{T_n}{b_n}}\frac{1}{T_n}\p_\gam\mbbg_{1,n}(\ges)(\bges-\ges)&=\frac{1}{\sqrt{T_nb_n}}\sum_{i=1}^{k_n}(w_i-1) \sum_{j\in B_{k_i}} \zeta_{j}(\ges)+r_{nB}\\
    \label{seg2}&=\frac{1}{\sqrt{T_nb_n}}\sum_{i=1}^{k_n}(w_i-1) \sum_{j\in B_{k_i}} \zeta_{j}(\gam^\star)+r_{nB},
\end{align}
and drift part:
\begin{align}
    \label{sea1}\frac{1}{\sqrt{T_n}}\p_\gam\mbbg_{2,n}(\aes)(\baes-\aes)&=\frac{1}{\sqrt{T_n}}\sum_{i=1}^{k_n}(w_i-1) \sum_{j\in B_{k_i}} \eta_{j}(\aes,\gam^\star)+r_{nB}\\
    \label{sea2}&=\frac{1}{\sqrt{T_n}}\sum_{i=1}^{k_n}(w_i-1) \sum_{j\in B_{k_i}} \eta_{j}(\al^\star,\gam^\star)+r_{nB}.
\end{align}

\medskip
\subsubsection{Scale part}

Define the function $\mbbh^B_n$ on $\mbbr^{p_\gam}$ by
\begin{align*}
\mbbh^B_n(t)&=\frac{1}{\sqrt{T_nb_n}}\sum_{i=1}^{k_n} w_i\sum_{j\in B_{j}}\left\{\zeta_j\left(\ges+\sqrt{\frac{b_n}{T_n}} t\right)-\zeta_j\left(\ges\right)\right\}-\frac{1}{T_n}\p_\gam\mbbg_{1,n}(\ges)t.
\end{align*}
First we show that
\begin{equation}\label{L2ignore}
\eb\left[\sup_{|t|\leq {K_n}}\left|\mbbh_n^{B}(t)\right|^2\right]=o_p(1),
\end{equation}
for any positive sequence $(K_n)$ fulfilling $K_n=o_p(\sqrt{k_n})$.
For instance, $K_n=T_n^{1/4}$ satisfies the above condition. 
Combined with Proposition \ref{balance} and the consistency of $\ges$, we have $\sqrt{\frac{b_n}{T_n}} K_n=o_p(1)$ and we may and do assume $\ges+\sqrt{\frac{b_n}{T_n}} t\in\Theta_\gam$ as long as $|t|\leq K_n$.
By applying Taylor's formula twice, we obtain
\begin{align*}
&\mbbh^B_n(t)=\mbbh^B_{1,n}(t)+\mbbh^B_{2,n}(t)+\mbbh^B_{3,n}(t),
% &=\frac{1}{T_n}\sum_{i=1}^{k_n}(w_i-1)\sum_{j\in B_{k_i}} t^\top\p_\gam \zeta_j(\ges)+\sqrt{\frac{b_n}{T_n^3}}\sum_{i=1}^{k_n}w_i \sum_{j\in B_{k_i}} \int_0^1 \p_\gam^{\otimes 2} \zeta_j\left(\ges+\sqrt{\frac{b_n}{T_n}}ut\right) du[t,t]\\
% &=\frac{1}{T_n}\sum_{i=1}^{k_n}(w_i-1)\sum_{j\in B_{k_i}}  t^\top\p_\gam \zeta_j(\gam^\star)+\frac{1}{T_n}\sum_{i=1}^{k_n}(w_i-1)\sum_{j\in B_{k_i}} \int_0^1\p_\gam^{\otimes 2} \zeta_j(\ges+s(\gam^\star-\ges)) ds[t,\ges-\gam^\star]\\
% &+\sqrt{\frac{b_n}{T_n^3}}\sum_{i=1}^{k_n}w_i \sum_{j\in B_{k_i}} \int_0^1 \p_\gam^{\otimes 2} \zeta_j\left(\ges+\sqrt{\frac{b_n}{T_n}}ut\right) du[t,t].
\end{align*}
where for notational simplicity, we write
\begin{align*}
&\mbbh^B_{1,n}(t)=\frac{1}{T_n}\sum_{i=1}^{k_n}(w_i-1)\sum_{j\in B_{k_i}}  t^\top\p_\gam \zeta_j(\gam^\star),\\
&\mbbh^B_{2,n}(t)=\frac{1}{T_n}\sum_{i=1}^{k_n}(w_i-1)\sum_{j\in B_{k_i}} \int_0^1\p_\gam^{\otimes 2} \zeta_j(\ges+s(\gam^\star-\ges)) ds[t,\ges-\gam^\star],\\
&\mbbh^B_{3,n}(t)=\sqrt{\frac{b_n}{T_n^3}}\sum_{i=1}^{k_n}w_i \sum_{j\in B_{k_i}} \int_0^1 \p_\gam^{\otimes 2} \zeta_j\left(\ges+\sqrt{\frac{b_n}{T_n}}ut\right) du[t,t].
\end{align*}
From now on, we separately look at these three ingredients.
Assumption \ref{weights} yields that
\begin{equation*}
\eb\left[\sup_{|t|\leq {K_n}}\left|\mbbh^B_{1,n}(t)\right|^2\right]\leq \frac{K_n^2}{T_n^2}\sum_{i=1}^{k_n}\left|\sum_{j\in B_{k_i}} \p_\gam \zeta_j(\gam^\star)\right|^2.
\end{equation*}
Decompose $\p_\gam\zeta_j(\gam^\star)$ as $\p_\gam\zeta(\gam^\star)=\zeta_{1,j}+\zeta_{2,j}$ where
\begin{align*}
&\zeta_{1,j}=h_n\frac{c_{t_{j-1}}\p_\gam^{\otimes 2}c_{t_{j-1}}-(\p_\gam c_{t_{j-1}})^{\otimes 2}}{c^4_{t_{j-1}}}\left(c^2_{t_{j-1}}-C^2_{t_{j-1}}\right)+2h_n\frac{(\p_\gam c_{t_{j-1}})^{\otimes 2}}{c^4_{t_{j-1}}}C^2_{t_{j-1}},\\
&\zeta_{2,j}=\frac{c_{t_{j-1}}\p_\gam^{\otimes 2}c_{t_{j-1}}-3(\p_\gam c_{t_{j-1}})^{\otimes 2}}{c^4_{t_{j-1}}}\left[h_nC_{t_{j-1}}^2-(\D_j X)^2\right].
\end{align*}
% Then $\p_\gam\zeta_{j}(\gam^\star)=\zeta_{1,j}+\zeta_{2,j}$, and 
% \begin{equation*}
% \left|\sum_{j\in B_{k_i}} \p_\gam \zeta_j(\gam^\star)\right|^2\lesssim \left|\sum_{j\in B_{k_i}}  \zeta_{1,j}\right|^2 +\left|\sum_{j\in B_{k_i}}  \zeta_{2,j}\right|^2
% \end{equation*}
% are trivial.
Jensen's inequality and the ergodic theorem yield that 
\begin{equation}\label{l2xi1}
\frac{1}{T_n^2}\sum_{i=1}^{k_n}\left|\sum_{j\in B_{k_i}}  \zeta_{1,j}\right|^2\leq \frac{1}{nk_n}\sum_{i=1}^{k_n}\sum_{j\in B_{k_i}}  \zeta_{1,j}^2=O_p\left(k_n^{-1}\right).
\end{equation}
It follows from Lemma \ref{bsum} and Lemma \ref{wlbsum} that
\begin{align}
     \nn \frac{1}{T_n^2}\sum_{i=1}^{k_n}\left|\sum_{j\in B_{k_i}}  \zeta_{2,j}\right|^2 =
     \begin{cases}
     O_p\left(n^{-1}\right),& \text{in the diffusion case,}\\
     O_p\left(T_n^{-1}\right),& \text{in the pure-jump L\'{e}vy driven case.}
     \end{cases}
    %  & \lesssim \frac{1}{T_n^2}\sum_{i=1}^{k_n}\left|\sum_{j\in B_{k_i}} \frac{c_{t_{j-1}}\p_\gam^{\otimes 2}c_{t_{j-1}}-3(\p_\gam c_{t_{j-1}})^{\otimes 2}}{c^4_{t_{j-1}}}\left(\intj A_sds\right)^2\right|^2\\
    %  \nn &+\frac{1}{T_n^2}\sum_{i=1}^{k_n}\left|\sum_{j\in B_{k_i}} \frac{c_{t_{j-1}}\p_\gam^{\otimes 2}c_{t_{j-1}}-3(\p_\gam c_{t_{j-1}})^{\otimes 2}}{c^4_{t_{j-1}}}\left[\left(\intj C_{s-}dZ_s\right)^2-h_nc_{t_{j-1}}^2\right]\right|^2
\end{align}
From these estimates, we get
\begin{equation}\label{l2mbh1}
    \eb\left[\sup_{|t|\leq K_n}\left|\mbbh^B_{1,n}(t)\right|^2\right]=O_p(K_n^2k_n^{-1})=o_p(1).
\end{equation}
% Before we proceed to next step, we should remark that in the diffusion case, the bound of \eqref{l2esti1} and \eqref{l2esti2} can be improved to $(nh_n)^2k_n^{-2}$ and $nh_nk_n^{-1}$, respectively.
% These can easily be shown by making use of \eqref{bhd} and \eqref{cbhd}.
% Although we do not need these stringent bounds at this time, we will use them later.

\medskip

Assumption \ref{weights} leads to
\begin{align}
    \nn \eb\left[\sup_{|t|\leq K_n}\left|\mbbh^B_{2,n}(t)\right|^2\right]\leq \frac{K_n^2}{T_n^2}\left|\ges-\gam^\star\right|^2\sum_{i=1}^{k_n}\left|\sum_{j\in B_{k_i}} \int_0^1\p_\gam^{\otimes 2} \zeta_j(\ges+s(\gam^\star-\ges)) ds\right|^2.
\end{align}
From Assumption \ref{Smoothness}, there exists a positive constants $M_1$ and $M_2$ such that
\begin{align}
    &\nn \left|\int_0^1\p_\gam^{\otimes 2} \zeta_j(\ges+s(\gam^\star-\ges)) ds\right|\\
    &\nn \lesssim \left(1+|X_{t_{j-1}}|^{M_1}\right)\left\{h_n(1+|X_{t_{j-1}}|^{M_2})+\left[(\D_j X)^2-h_nC_{t_{j-1}}\right]\right\}.
\end{align}
Combined with Jensen's inequality, Lemma \ref{bsum}, and Lemma \ref{wlbsum}, we obtain
\begin{align}
    \label{poly} E\left[\left|\sum_{j\in B_{k_i}} \int_0^1\p_\gam^{\otimes 2} \zeta_j(\ges+s(\gam^\star-\ges)) ds\right|^2\right]
     \lesssim \frac{T_n^2}{k^2_n}.
\end{align}
Hence the tightness of $\sqrt{T_n}(\ges-\gam^\star)$ implies that 
\begin{equation}\label{l2mbh2}
    \eb\left[\sup_{|t|\leq K_n}\left|\mbbh^B_{2,n}(t)\right|^2\right]=O_p\left(K_n^2k_n^{-1}T_n^{-1}\right)=o_p(1).
\end{equation}

\medskip
We rewrite $\mbbh^B_{3,n}(t)$ as
\begin{align}
    \nn \mbbh^B_{3,n}(t)&=\sqrt{\frac{b_n}{T_n^3}}\sum_{i=1}^{k_n}(w_i-1) \sum_{j\in B_{k_i}} \int_0^1 \p_\gam^{\otimes 2} \zeta_j\left(\ges+\sqrt{\frac{b_n}{T_n}}ut\right) du[t,t]\\
    \nn &+\sqrt{\frac{b_n}{T_n^3}}\sumj \int_0^1 \p_\gam^{\otimes 2} \zeta_j\left(\ges+\sqrt{\frac{b_n}{T_n}}ut\right) du[t,t].
\end{align}
Recall that $b_n=O_p(1)$.
By utilizing Cauchy-Schwartz inequality and the estimates in \eqref{poly}, we have
\begin{align}
    \nn &\eb\left[\sup_{|t|\leq K_n}\left|\sqrt{\frac{b_n}{T_n^3}}\sum_{i=1}^{k_n}(w_i-1) \sum_{j\in B_{k_i}} \int_0^1 \p_\gam^{\otimes 2} \zeta_j\left(\ges+\sqrt{\frac{b_n}{T_n}}ut\right) du[t,t]\right|^2\right]\\
    \nn & \lesssim \frac{K_n^2b_n}{T_n^3}\sum_{i=1}^{k_n}\sup_{|t|\leq K_n}\left|\sum_{j\in B_{k_i}} \int_0^1 \p_\gam^{\otimes 2} \zeta_j\left(\ges+\sqrt{\frac{b_n}{T_n}}ut\right) du\right|^2\\
    \nn & =O_p(K_n^2k_n^{-1}T_n^{-1}). 
\end{align}
Similarly, for any $C>0$, we have 
\begin{equation}
    \nn\sup_{|t|\leq K_n}\left|\sqrt{\frac{b_n}{T_n^3}}\sumj \int_0^1 \p_\gam^{\otimes 2} \zeta_j\left(\ges+\sqrt{\frac{b_n}{T_n}}ut\right) du[t,t]\right|^2=O_p\left(K_n^2T_n^{-1}\right),
\end{equation}
so that 
\begin{equation}\label{l2mbh3}
    \eb\left[\sup_{|t|\leq K_n}\left|\mbbh^B_{3,n}(t)\right|^2\right]=O_p\left(K_n^2T_n^{-1}\right)=o_p(1).
\end{equation}
Putting \eqref{l2mbh1}, \eqref{l2mbh2}, and \eqref{l2mbh3} together, we arrive at \eqref{L2ignore}.
% \tcb{By applying Holder's inequality, we can show the expectation of the absolute value of $\mbbh^{\textbf{B}}_n$ is $o(1)$ and this gives the probability of the existence of $\tes^{\textbf{B}}$ tends to 1}

\medskip
Next we observe that
\begin{equation}
    \label{op1}\pb\left(\inf_{|t|=K_n}\left[-\frac{1}{\sqrt{T_nb_n}}\sum_{i=1}^{k_n} w_i\sum_{j\in B_{k_i}} t^\top\zeta_j\left(\ges+\sqrt{\frac{b_n}{T_n}}t\right)\right]>0\right)=o_p(1).
\end{equation}
By using the estimates in \cite{Kes97}, \cite{UchYos11}, \cite{Mas13-1}, and \cite{Ueh19}, it is easy to observe that
\begin{equation}\label{pconv}
\frac{1}{T_n}\p_\gam\mbbg_{1,n}(\ges) \cip -\Gam_\gam>0.
\end{equation}
We hereafter write the smallest eigenvalue of $-\Gam_\gam$ and $\frac{1}{T_n}\p_\gam\mbbg_{1,n}(\ges)$ as $\lam_\gam$ and $\lam_{\gam,n}$, respectively.
From \eqref{pconv}, we may and do assume that for a fixed $\del\in(0,\lam_\gam)$ and enough large $n$, $\frac{1}{T_n}\p_\gam\mbbg_{1,n}(\ges)$ is positive definite and $|\lam_\gam-\lam_{\gam,n}|<\del$ without loss of generality.
For such $n$, and the same sequence $(K_n)$ as the estimates of $\mbbh_n^B$, it follows that
\begin{align*}
&\pb\left(\inf_{|t|=K_n}\left[-\frac{1}{\sqrt{T_nb_n}}\sum_{i=1}^{k_n} w_i\sum_{j\in B_{k_i}} t^\top\zeta_j\left(\ges+\sqrt{\frac{b_n}{T_n}}t\right)\right]>0\right)\\
&\geq \pb\left(\inf_{|t|=K_n}\left\{-\frac{1}{\sqrt{T_nb_n}}\sum_{i=1}^{k_n} w_i\sum_{j\in B_{k_i}} t^\top\zeta_j\left(\ges\right)+t^\top\mbbh^B_n(t)+\frac{1}{T_n}\p_\gam\mbbg_{1,n}(\ges)[t,t]\right\}>0\right)\\
&\geq \pb\left(-\sup_{|t|=K_n}\frac{1}{\sqrt{T_nb_n}}\sum_{i=1}^{k_n} w_i\sum_{j\in B_{k_i}}t^\top\zeta_j\left(\ges\right)-\sup_{|t|=K_n}\left|t^\top\mbbh^B_n(t)\right|>-\inf_{|t|=K_n}\frac{1}{T_n}\p_\gam\mbbg_{1,n}(\ges)[t,t]\right)\\
&\geq 1-\pb\left(\left|\frac{1}{\sqrt{T_nb_n}}\sum_{i=1}^{k_n} w_i\sum_{j\in B_{k_i}}\zeta_j\left(\ges\right)\right|+\sup_{|t|=K_n}\left|\mbbh^B_n(t)\right|\geq (\lam_\gam-\del)K_n\right)\\
&\geq 1-\pb\left(\left|\frac{1}{\sqrt{T_nb_n}}\sum_{i=1}^{k_n} w_i\sum_{j\in B_{k_i}}\zeta_j\left(\ges\right)\right|\geq\frac{(\lam_\gam-\del)K_n}{2}\right)-\pb\left(\sup_{|t|=K_n}\left|\mbbh^B_n(t)\right|\geq\frac{(\lam_\gam-\del)K_n}{2}\right).
\end{align*}
For abbreviation, let 
\begin{equation}
    \nn M_n=M(K_n)=\frac{(\lam_\gam-\del)K_n}{2}.
\end{equation}
Then, Taylor's expansion and Chebychev's inequality gives
\begin{align}
    \nn&E\left[\pb\left(\left|\frac{1}{\sqrt{T_nb_n}}\sum_{i=1}^{k_n} w_i\sum_{j\in B_{k_i}}\zeta_j\left(\ges\right)\right|\geq M_n\right)\right]\\
    \nn&\leq\frac{4}{M^2_nT_nb_n} \sum_{i=1}^{k_n} \left|\sum_{j\in B_{k_i}}\zeta_j\left(\gam^\star\right)\right|^2+\frac{4|\ges-\gam^\star|^2}{M^2_nT_nb_n}\sum_{i=1}^{k_n} \left|\sum_{j\in B_{k_i}}\p_\gam\zeta_j\left(\gam^\star\right)\right|^2\\
    \nn&+\frac{4|\ges-\gam^\star|^4}{M^2_nT_nb_n} \sum_{i=1}^{k_n} \left|\sum_{j\in B_{k_i}}\int_0^1\p_\gam^{\otimes 2} \zeta_j(\ges+s(\gam^\star-\ges)) ds\right|^2.
\end{align}
$\zeta_j(\gam^\star)$ can be decomposed as:
\begin{equation}
    \nn \zeta_j(\gam^\star)=h_n\frac{\p_\gam c_{t_{j-1}}}{c^3_{t_{j-1}}}(c_{t_{j-1}}^2-C_{t_{j-1}}^2)+\frac{\p_\gam c_{t_{j-1}}}{c^3_{t_{j-1}}}\left[h_nC^2_{t_{j-1}}-(\D_j X)^2\right].
\end{equation}
Notice that from Proposition \ref{balance}, $T_nb_n=O_p(nh_n^2)$ in the correctly specified diffusion case, and $T_nb_n=O_p(T_n)$ in the other cases, and that the function
\begin{equation}
    \nn \frac{\p_\gam c(x,\gam^\star)}{c^3(x,\gam^\star)}(c^2(x,\gam^\star)-C^2(x))
\end{equation}
is centered.
Hence 
\begin{equation}\label{Op1}
     \frac{1}{T_nb_n} \sum_{i=1}^{k_n} \left|\sum_{j\in B_{k_i}}\zeta_j\left(\gam^\star\right)\right|^2=O_p(1),
\end{equation}
is straightforward from Lemma \ref{bsum}, Lemma \ref{wlbsum}, and Lemma \ref{misse}.
From Proposition \ref{balance}, we have
\begin{equation}
    \nn \sqrt{\frac{T_n}{b_n}}(\ges-\gam^\star)=O_p(1),
\end{equation} 
and the estimates of $\mbbh^{B}_{1,n}$ and $\mbbh^{B}_{2,n}$ imply that 
\begin{align*}
    \nn &\frac{4|\ges-\gam^\star|^2}{M^2_nT_nb_n}\sum_{i=1}^{k_n} \left|\sum_{j\in B_{k_i}}\p_\gam\zeta_j\left(\gam^\star\right)\right|^2+\frac{4|\ges-\gam^\star|^4}{M^2_nT_nb_n} \sum_{i=1}^{k_n} \left|\sum_{j\in B_{k_i}}\int_0^1\p_\gam^{\otimes 2} \zeta_j(\ges+s(\gam^\star-\ges)) ds\right|^2\\
    \nn &=o_p(1).
\end{align*}
Hence we obtain \eqref{op1}.

\medskip
Let $t=t_n$ be a root of the equation
\begin{equation}\label{be}
\mbbg_{1,n}^{\textbf{B}}\left(\ges+\sqrt{\frac{b_n}{T_n}}t\right)=0,
\end{equation}
if it exists, and otherwise, $t$ be an arbitrary element of $\mbbr^{p_\gam}$. 
For such $t$, we define
\begin{equation}
    \nn \bges=\ges+\sqrt{\frac{b_n}{T_n}}t.
\end{equation}
On the set
\begin{equation}
       \nn \inf_{|t|=K_n}\left[-\frac{1}{\sqrt{T_nb_n}}\sum_{i=1}^{k_n} w_i\sum_{j\in B_{k_i}} t^\top\zeta_j\left(\ges+\sqrt{\frac{b_n}{T_n}}t\right)\right]>0,
\end{equation}
the continuity of $\zeta(\gam)$ and \cite[Theorem 6.3.4]{OrtRhe70} ensure that a root $t$ of \eqref{be} does exist within $|t|\leq K_n$.
Since $\sqrt{b_n/T_n}K_n=o_p(1)$, \eqref{op1} and the consistency of $\ges$ lead to 
\begin{equation}\label{exgam}
    \pb(\ges^{\textbf{B}}\in\Theta_\gam)=1-o_p(1).
\end{equation}
Finally, Chebyshev's inequality, \eqref{L2ignore} and \eqref{op1} yield that for any $M>0$,
\begin{align}
    \nn &\pb\left(    \left|\sqrt{\frac{T_n}{b_n}}\frac{1}{T_n}\p_\gam\mbbg_{1,n}(\ges)(\bges-\ges)-\frac{1}{\sqrt{T_nb_n}}\sum_{i=1}^{k_n}(w_i-1) \sum_{j\in B_{k_i}} \zeta_{j}(\ges)\right|>M\right)\\
    \nn &\leq \pb\left(\inf_{|t|=K_n}\left[-\frac{1}{\sqrt{T_nb_n}}\sum_{i=1}^{k_n} w_i\sum_{j\in B_{k_i}} t^\top\zeta_j\left(\ges+\sqrt{\frac{b_n}{T_n}}t\right)\right]\leq0\right)\\
    \nn &+\pb\left(\left\{\inf_{|t|=K_n}\left[-\frac{1}{\sqrt{T_nb_n}}\sum_{i=1}^{k_n} w_i\sum_{j\in B_{k_i}} t^\top\zeta_j\left(\ges+\sqrt{\frac{b_n}{T_n}}t\right)\right]>0\right\}\cap \left\{\sup_{|t|\leq K_n}|\mbbh^B_n(t)|>M\right\}\right)\\
    \nn&\leq \pb\left(\inf_{|t|=K_n}\left[-\frac{1}{\sqrt{T_nb_n}}\sum_{i=1}^{k_n} w_i\sum_{j\in B_{k_i}} t^\top\zeta_j\left(\ges+\sqrt{\frac{b_n}{T_n}}t\right)\right]\leq0\right)+\frac{1}{M^2}\eb\left[\sup_{|t|\leq K_n}|\mbbh^B_n(t)|^2\right]\\
    \label{aequiv}& =o_p(1).
\end{align}
% Hence, putting 
% \begin{equation}
%     \nn \bges=\ges+\sqrt{\frac{b_n}{T_n}}s_n,
% \end{equation}
% we get
% \begin{equation}
%     \nn P\left(\pb\left(\left|\sqrt{\frac{T_n}{b_n}}(\bges-\ges)\right|\leq C\right)<1-\ep'\right)<\ep,
% \end{equation}
% for all $n\geq N'$.
% The dependency between $\ges^{\textbf{B}}$ and $\ep$ can be handled by the argument in \cite[page 148]{Ser80}.
Combined with the estimates of $\mbbh_{1,n}^B$ and $\mbbh_{2,n}^B$, we obtain \eqref{seg1} and \eqref{seg2}.

\medskip
\subsubsection{Drift part}

Since the route is a similar to \eqref{seg1} and \eqref{seg2}, we sometimes omit the details below.
Introduce the function $\mbbu_n^B$ on $\mbbr^{p_\al}$ by
\begin{equation}
    \nn \mbbu_n^B(t)=\frac{1}{\sqrt{T_n}}\sum_{i=1}^{k_n} w_i\sum_{j\in B_{j}}\left\{\eta_j\left(\aes+\frac{t}{\sqrt{T_n}} ,\ges\right)-\eta_j\left(\aes,\ges\right)\right\}-\frac{1}{T_n}\p_\al\mbbg_{2,n}(\aes)t,
\end{equation}
and Taylor's expansion gives
\begin{align}
    \nn \mbbu_n^B(t)=\mbbu^B_{1,n}(t)+\mbbu^B_{2,n}(t)+\mbbu^B_{3,n}(t),
\end{align}
where
\begin{align*}
&\mbbu^B_{1,n}(t)=\frac{1}{T_n}\sum_{i=1}^{k_n}(w_i-1)\sum_{j\in B_{k_i}}  t^\top\p_\al \eta_j(\al^\star,\ges),\\
&\mbbu^B_{2,n}(t)=\frac{1}{T_n}\sum_{i=1}^{k_n}(w_i-1)\sum_{j\in B_{k_i}} \int_0^1\p_\al^{\otimes 2} \eta_j(\aes+s(\al^\star-\aes),\ges) ds[t,\aes-\al^\star],\\
&\mbbu^B_{3,n}(t)=\frac{1}{T_n^{3/2}}\sum_{i=1}^{k_n}w_i \sum_{j\in B_{k_i}} \int_0^1 \p_\al^{\otimes 2} \eta_j\left(\aes+\frac{1}{\sqrt{T_n}}ut,\ges\right) du[t,t].
\end{align*}
As can be seen in the proof of \eqref{seg1}, it is sufficient for \eqref{sea1} to show that 
\begin{align}
    &\label{aL2ignore} \eb\left[\sup_{|t|\leq K_n}\left|\mbbu^B_{n}(t)\right|^2\right]=o_p(1),\\
    &\label{aOp1}  \frac{1}{T_n} \sum_{i=1}^{k_n} \left|\sum_{j\in B_{k_i}}\eta_j\left(\al^\star,\ges\right)\right|^2=O_p(1),
\end{align}
where $(K_n)$ denotes the same positive sequence as the previous part.
We first show \eqref{aL2ignore}. 
Decompose $\mbbu^B_{1,n}(t)$ as
\begin{align}
    \nn \mbbu^B_{1,n}(t)&=\frac{1}{T_n}\sum_{i=1}^{k_n}(w_i-1)\sum_{j\in B_{k_i}}  t^\top\p_\al \eta_j(\al^\star,\gam^\star)\\
    \nn &+\frac{1}{T_n}\sum_{i=1}^{k_n}(w_i-1)\sum_{j\in B_{k_i}}  \int_0^1\p_\gam\p_\al \eta_j(\al^\star,\ges+u(\gam^\star-\ges))du[t,\ges-\gam^\star].
\end{align}
Since $\p_\al\eta_j(\al^\star,\gam^\star)$ can be rewritten as:
\begin{align}
    \nn \p_\al\eta_j(\al^\star,\gam^\star)&=\frac{\p^{\otimes2}_\al a_{t_{j-1}}}{c^2_{t_{j-1}}}\intj (A_s-a_{t_{j-1}})ds+\frac{\p^{\otimes2}_\al a_{t_{j-1}}}{c^2_{t_{j-1}}}\intj C_{s-}dZ_s\\
    \nn &+h_n\left[\frac{\p_\al^{\otimes2} a_{t_{j-1}}}{c^2_{t_{j-1}}}a_{t_{j-1}}+\frac{\p_\al^{\otimes2} a_{t_{j-1}}-\left(\p_\al a_{t_{j-1}}\right)^{\otimes 2}}{c^2_{t_{j-1}}}\right],
\end{align}
it follows from Jensen's inequality and Lemma \ref{bsum} that  
\begin{align}
    \nn&\eb\left[\sup_{|t|\leq K_n}\left|\frac{1}{T_n}\sum_{i=1}^{k_n}(w_i-1)\sum_{j\in B_{k_i}}  t^\top\p_\al \eta_j(\al^\star,\gam^\star)\right|^2\right]
    % \nn &\lesssim \frac{1}{T_n^2}\sum_{i=1}^{k_n}E\left[\left|\sum_{j\in B_{k_i}}\frac{\p^{\otimes2}_\al a_{t_{j-1}}}{c^2_{t_{j-1}}}\xi_j\right|^2+\left|\sum_{j\in B_{k_i}}\frac{\p^{\otimes2}_\al a_{t_{j-1}}}{c^2_{t_{j-1}}}c_{t_{j-1}}\D_j Z\right|^2+h_n^2\left|\sum_{j\in B_{k_i}}\frac{\p^{\otimes2}_\al a_{t_{j-1}}-\p_\al a_{t_{j-1}}^{\otimes2}}{c^2_{t_{j-1}}}\right|^2\right]\\
    % \nn &\lesssim \frac{k_n}{T_n^2} \left(\frac{T_n^2}{k_n^2}+\frac{T_n}{k_n}\right)\\
    =O_p(K_n^2k_n^{-1})=o_p(1).
\end{align}
Again applying Jensen's inequality, we obtain
\begin{align}
    \nn & \eb\left[\sup_{|t|\leq K_n} \left|\frac{1}{T_n}\sum_{i=1}^{k_n}(w_i-1)\sum_{j\in B_{k_i}}  \int_0^1\p_\gam\p_\al \eta_j(\al^\star,\ges+u(\gam^\star-\ges))du[t,\ges-\gam^\star]\right|^2\right]\\
    \nn & \lesssim \frac{K_n^2\left|\ges-\gam^\star\right|^2}{T_n^2}\sum_{i=1}^{k_n}\left|\sum_{j\in B_{k_i}}\int_0^1\p_\gam\p_\al \eta_j(\al^\star,\ges+u(\gam^\star-\ges))du\right|^2\\
    \nn & \leq \frac{K_n^2\left|\ges-\gam^\star\right|^2}{T_n^2}\sum_{i=1}^{k_n}\sup_{\gam\in\Theta}\left|\sum_{j\in B_{k_i}}\p_\gam\p_\al \eta_j(\al^\star,\gam)\right|^2.
\end{align}
For any $q>p_\gam$, Sobolev's inequality (cf. \cite{Ada73}) gives 
\begin{align}
     \nn & E\left[\sup_{\gam\in\Theta}\left|\sum_{j\in B_{k_i}}\p_\gam\p_\al \eta_j(\al^\star,\gam)\right|^q\right]\\
     \nn &\lesssim \sup_{\gam\in\Theta}E\left[\left|\sum_{j\in B_{k_i}}\p_\gam\p_\al \eta_j(\al^\star,\gam)\right|^q\right]+\sup_{\gam\in\Theta}E\left[\left|\sum_{j\in B_{k_i}}\p_\gam^{\otimes2}\p_\al \eta_j(\al^\star,\gam)\right|^q\right].
\end{align}
Now we focus on the first term of the right-hand-side.
$\p_\gam\p_\al \eta_j(\al^\star,\gam)$ can be decomposed as:
\begin{equation}
    \nn \p_\gam\p_\al \eta_j(\al^\star,\gam)=\p_\gam c^{-2}_{t_{j-1}}(\gam)\p_\al^{\otimes 2} a_{t_{j-1}}\left(\intj A_sds+\intj C_{s-}dZ_s\right)-h_n\p_\gam c^{-2}_{t_{j-1}}(\gam)\left(\p_\al a_{t_{j-1}}\right)^{\otimes 2}.
\end{equation}
Jensen's inequality gives
\begin{align}
    \nn & \sup_{\gam\in\Theta_\gam}E\left[\left|\sum_{j\in B_{k_i}}\left[\p_\gam c^{-2}_{t_{j-1}}(\gam)\p_\al^{\otimes 2} a_{t_{j-1}}\intj A_sds-h_n\p_\gam c^{-2}_{t_{j-1}}(\gam)\left(\p_\al a_{t_{j-1}}\right)^{\otimes 2}\right]\right|^q\right]\lesssim \frac{T_n^q}{k_n^q}.
\end{align}
Burkholder's inequality leads to 
\begin{align}
    \nn & \sup_{\gam\in\Theta_\gam}E\left[\left|\sum_{j\in B_{k_i}}\p_\gam c^{-2}_{t_{j-1}}(\gam)\p_\al^{\otimes 2} a_{t_{j-1}}\intj C_{s-}dZ_s\right|^q\right]\\
    \nn &= \sup_{\gam\in\Theta_\gam}E\left[\left|\int_{(i-1)c_nh_n}^{ic_nh_n} \sum_{j\in B_{k_i}}\p_\gam c^{-2}_{t_{j-1}}(\gam)\p_\al^{\otimes 2} a_{t_{j-1}}C_{s-}\chi_j(s)dZ_s\right|^q\right]\\
    \nn & \lesssim \frac{T_n^{q/2-1}}{k_n^{q/2-1}} \sup_{\gam\in\Theta} \sum_{j\in B_{k_i}}\intj E\left[\left|\p_\gam c^{-2}_{t_{j-1}}(\gam)\p_\al^{\otimes 2} a_{t_{j-1}}C_{s}\right|^q\right]ds\\
    \nn & \lesssim \frac{T_n^{q/2}}{k_n^{q/2}},
\end{align}
so that for any $q>p_\gam$,
\begin{align}
    \nn  E\left[\sup_{\gam\in\Theta}\left|\sum_{j\in B_{k_i}}\p_\gam\p_\al \eta_j(\al^\star,\gam)\right|^2\right]\leq E\left[\sup_{\gam\in\Theta}\left|\sum_{j\in B_{k_i}}\p_\gam\p_\al \eta_j(\al^\star,\gam)\right|^q\right]^{2/q}\lesssim \frac{T_n^2}{k_n^2}.
\end{align}
Analogously, we can evaluate $\sup_{\gam\in\Theta}E\left[\left|\sum_{j\in B_{k_i}}\p_\gam^{\otimes2}\p_\al \eta_j(\al^\star,\gam)\right|^q\right]$, and the tightness of $\sqrt{T_n}(\ges-\gam^\star)$ leads to
\begin{align}
    \nn &\eb\left[\sup_{|t|\leq K_n} \left|\frac{1}{T_n}\sum_{i=1}^{k_n}(w_i-1)\sum_{j\in B_{k_i}}  \int_0^1\p_\gam\p_\al \eta_j(\al^\star,\ges+u(\gam^\star-\ges))du[t,\ges-\gam^\star]\right|^2\right]\\
    \nn &=O_p\left(K_n^2k_n^{-1}T_n^{-1}\right).
\end{align}
Hence we obtain
\begin{equation}
    \nn\eb\left[\sup_{|t|\leq K_n}\left|\mbbu^B_{1,n}(t)\right|^2\right]=o_p(1).
\end{equation}
By taking a similar route, it is easy to see
\begin{equation}
    \nn \eb\left[\sup_{|t|\leq K_n}\left|\mbbu^B_{2,n}(t)\right|^2\right]+\eb\left[\sup_{|t|\leq K_n}\left|\mbbu^B_{3,n}(t)\right|^2\right]=o_p(1),
\end{equation}
and in turn we get \eqref{aL2ignore}.
We next show \eqref{aOp1}.
Taylor's formula leads to
\begin{align}
    \nn & \frac{1}{T_n} \sum_{i=1}^{k_n} \left|\sum_{j\in B_{k_i}}\eta_j\left(\al^\star,\ges\right)\right|^2\\
    \label{Taylor} & \lesssim \frac{1}{T_n} \sum_{i=1}^{k_n} \left|\sum_{j\in B_{k_i}}\eta_j\left(\al^\star,\gam^\star\right)\right|^2+ \frac{|\ges-\gam^\star|^2}{T_n} \sum_{i=1}^{k_n} \left|\sum_{j\in B_{k_i}} \int_0^1 \p_\gam \eta_j\left(\al^\star,\ges+u(\gam^\star-\ges)\right)du\right|^2,
\end{align}
and the first term of the right-hand-side is $O_p(1)$ from an easy application of Lemma \ref{bsum}.
As for the second term of the right-hand-side, it is $O_p(k_n^{-1})$ by making use of the same argument based on Sobolev's inequality presented above.
Hence \eqref{aOp1} follows, and by mimicking the proof of \eqref{op1}, we get
\begin{align*}
    &\pb\left(\inf_{|t|=K_n}\left[-\frac{1}{\sqrt{T_n}}\sum_{i=1}^{k_n} w_i\sum_{j\in B_{k_i}} t^\top\eta_j\left(\aes+\frac{t}{\sqrt{T_n}},\ges\right)\right]>0\right)=1-o_p(1).
\end{align*}
Again by using \cite[Theorem 6.3.4]{OrtRhe70}, it turns out that the equation
\begin{equation}
    \mbbg_{2,n}^\textbf{B}\left(\aes+\frac{t}{\sqrt{T_n}}\right)=0,
\end{equation}
has a root $t$ within $|t|\leq K_n$ on the set 
\begin{equation*}
    \inf_{|t|=K_n}\left[-\frac{1}{\sqrt{T_n}}\sum_{i=1}^{k_n} w_i\sum_{j\in B_{k_i}} t^\top\eta_j\left(\aes+\frac{t}{\sqrt{T_n}},\ges\right)\right]>0.
\end{equation*}
Hence, for $\baes:=\ges+t/\sqrt{T_n}$, we get
\begin{equation*}
    \pb(\baes\in\Theta_\al)=1-o_p(1).
\end{equation*} 
Mimicking \eqref{aequiv}, we get \eqref{sea1}.
It remains to prove \eqref{sea2}, but it automatically follows from the estimates of \eqref{Taylor}.
Combined with \eqref{seg1} and \eqref{seg2}, we obtain \eqref{se2}.
Moreover, Taylor's expansion and the calculation up to here lead to
\begin{equation}\label{se1}
     \hat{A}_n\bar{\Gam}_n^{1/2}(\hat{\theta}_{n}^{\textbf{B}}-\tes)=\hat{B}_n\sum_{i=1}^{k_n} (w_i-1)\sum_{j\in B_{k_i}}\begin{pmatrix}\displaystyle\frac{\p_\gam c_{t_{j-1}}}{c^3_{t_{j-1}}}\left[h_nc^2_{t_{j-1}}-(\D_j X)^2\right] \\ \displaystyle\frac{\p_\al a_{t_{j-1}}}{c^2_{t_{j-1}}}\left(\D_j X-h_na_{t_{j-1}}\right)
\end{pmatrix}+r_{nB}.
\end{equation}

\medskip
Now we move to the proof of \eqref{secsd}, \eqref{semsd}, \eqref{sepjl}, and \eqref{sempjl}.
In the correctly specified case, by taking Lemma \ref{bsum} and Lemma \ref{wlbsum} into consideration, \eqref{secsd} and \eqref{sepjl} are trivial from \eqref{se1}.
Concerning the misspecified case, it is enough for \eqref{semsd} and \eqref{sempjl} to show that for each $l\in\{1,2\}$,
% By taking the estimate of $\mbbh_{n,2}^B$ into consideration, the stochastic expansion \eqref{...} follows if we show:
\begin{align}
% &\frac{1}{\sqrt{T_n}}\sum_{i=1}^{k_n}(w_i-1) \sum_{j\in B_{k_i}} \intj\left[\frac{\p_\gam^2c_{t_{j-1}}c_{t_{j-1}}-(\p_\gam c_{t_{j-1}})^2}{c^4_{t_{j-1}}}\left(c^2_{t_{j-1}}-c^2_{t_{j-1}}\right)-\frac{\p_\gam^2c_sc_s-(\p_\gam c_s)^2}{c^4_s}\left(c^2_s-C^2_s\right)\right]ds=r_{nB},\\
&\label{pebi}\frac{1}{\sqrt{T_n}}\sum_{i=1}^{k_n}(w_i-1)\left(f_{l,{ic_nh_n}}-f_{l,{[(i-1)c_n+1]h_n}}\right)=r_{nB},\\
&\label{epebi}\frac{1}{\sqrt{T_n}}\sum_{i=1}^{k_n}(w_i-1)\left(g_{l,{ic_nh_n}}-g_{l,{[(i-1)c_n+1]h_n}}\right)=r_{nB}.
\end{align}
% Concerning the first equality, Jensen's inequality and \cite[Lemma ...]{Mas13-1}, the condition on $k_n$ imply that 
% \begin{align*}
% &\eb\left[\left(\frac{1}{\sqrt{T_n}}\sum_{i=1}^{k_n}(w_i-1) \sum_{j\in B_{k_i}} \intj\left[\frac{\p_\gam^2c_{t_{j-1}}c_{t_{j-1}}-(\p_\gam c_{t_{j-1}})^2}{c^4_{t_{j-1}}}\left(c^2_{t_{j-1}}-c^2_{t_{j-1}}\right)-\frac{\p_\gam^2c_sc_s-(\p_\gam c_s)^2}{c^4_s}\left(c^2_s-C^2_s\right)\right]ds\right)^2\right]\\
% &=O_p\left(\frac{nh_n^{\frac{3}{2}}}{k_n}\right)=o_p(1).
% \end{align*}
Since $f_1$ and $f_2$ is at most polynomial growth, it follows that for each $l\in\{1,2\}$, there exist positive constants $C_1$ and $C_2$ satisfying
\begin{equation*}
\max_{i\in\{1,\dots,k_n\}}E\left[\left|f_{l,ic_nh_n}-f_{l,[(i-1)c_n+1]h_n}\right|^2\right]\leq C_1+\sup_{t\geq 0}E[|X_t|^{C_2}]<\infty.
\end{equation*}
Hence we have
\begin{align*}
&\eb\left[\frac{1}{T_n}\sum_{i=1}^{k_n}(w_i-1)^2\left|f_{1,ic_nh_n}-f_{1,[(i-1)c_n+1]h_n}\right|^2\right]\\
&=\frac{1}{T_n}\sum_{i=1}^{k_n}\left|f_{1,ic_nh_n}-f_{1,[(i-1)c_n+1]h_n}\right|^2\\
&=O_p\left(\frac{k_n}{T_n}\right),
\end{align*}
and \eqref{pebi}.
Since $g_1$ and $g_2$ are also polynomial growth from their weighted H\"{o}lder continuity, \eqref{epebi} can be deduced in the same way. 
% \begin{equation*}
% \max_{i\in\{1,\dots,k_n\}}E\left[\left|g_{l,ic_nh_n}-g_{l,[(i-1)c_n+1]h_n}\right|^2\right]<\infty,
% \end{equation*}
% for each $l\in\{1,2\}$, so that we have
% \begin{align*}
% &\eb\left[\frac{1}{T_n}\sum_{i=1}^{k_n}(w_i-1)^2\left|g_{1,ic_nh_n}-g_{1,[(i-1)c_n+1]h_n}\right|^2\right]\\
% &=\frac{1}{T_n}\sum_{i=1}^{k_n}\left|g_{1,ic_nh_n}-g_{1,[(i-1)c_n+1]h_n}\right|^2\\
% &=O_p\left(\frac{k_n}{T_n}\right).
% \end{align*}

It remains to prove \eqref{cip}.
However, \eqref{cip} follows from $\mcf$-conditional Lindeberg-Feller central limit theorem by making use Lemma \ref{Wl2c}, and Lemma \ref{pLl2c}.
Hence the proof is complete.

\subsection*{Acknowledgement}
This work was supported by JST CREST Grant Number JPMJCR14D7, Japan, and  JSPS KAKENHI Grant Number JP19K20230.

\bibliographystyle{abbrv}

\end{document}